\newtheorem{thrm}{Theorem}[section]
\newtheorem{lem}[thrm]{Lemma}
\theoremstyle{definition}
\newtheorem{defn}[thrm]{Definition}
\newtheorem{rem}[thrm]{Remark}
\crefname{thrm}{Theorem}{Theorems}
\crefname{lem}{Lemma}{Lemmas}
\crefname{cor}{Corollary}{Corollaries}
\crefname{prop}{Proposition}{Propositions}
\crefname{defn}{Definition}{Definitions}
\crefname{exm}{Example}{Examples}
\crefname{rem}{Remark}{Remarks}
\crefname{section}{Section}{Sections}
\crefname{equation}{\unskip}{\unskip}
\crefname{enumi}{\unskip}{\unskip}
\renewcommand{\iff}{\Leftrightarrow}
\newcommand{\cR}{\mathcal R}
\begin{document}

\title[Lie Derivations of Incidence Algebras]{Lie Derivations of Incidence Algebras}

\author{Xian Zhang}
\address{School of Mathematical Sciences, Huaqiao University,
Quanzhou, Fujian, 362021, P. R. China}
\email{xianzhangmath@163.com}

\author{Mykola Khrypchenko}
\address{Departamento de Matem\'atica, Universidade Federal de Santa Catarina,  Campus Reitor Jo\~ao David Ferreira Lima, Florian\'opolis, SC, CEP: 88040--900, Brazil}
\email{nskhripchenko@gmail.com}

\begin{abstract}
Let $X$ be a locally finite preordered set, $\cR$ a commutative ring with identity and $I(X,\cR)$ the incidence algebra of $X$ over $\cR$. In this note we prove that each Lie derivation of $I(X,\cR)$ is proper, provided that $\cR$ is $2$-torsion free.
\end{abstract}

\subjclass[2010]{Primary 16W25; Secondary 16W10, 47L35}

\keywords{Derivation, Lie derivation, incidence algebra}

\maketitle

\section{Introduction and Preliminaries}\label{intro-prelim}

Let $A$ be an associative algebra over a commutative ring $\cR$ (possibly without identity). We define the {\it Lie product} $[x,y]:=xy-yx$ and {\it Jordan product} $x\circ y:=xy+yx$ for all $x,y\in A$. Then $(A,[\ ,\ ])$ is a Lie algebra and $(A,\circ)$ is a Jordan algebra. It is fascinating to study the connection between the associative, Lie and Jordan structures on $A$. In this field, two classes of maps are of crucial importance. One of them consists of maps, preserving a type of product, for example, Lie homomorphisms etc. The other one is formed by differential operators, satisfying a type of Leibniz formulas, for example, Jordan derivations etc.

We recall that an $\cR$-linear map $D: A\to A$ is called a {\it derivation} if $D(xy)=D(x)y+xD(y)$ for all $x,y\in A$, and it is called a {\it Lie derivation} if
$$
D([x,y])=[D(x),y]+[x,D(y)]
$$
for all $x,y\in A$. Note that if $D$ is a derivation of $A$ and $F$ is an $\cR$-linear map from $A$ into its center, then $D+F$ is a Lie derivation if and only if $F$ annihilates all commutators $[x,y]$. A Lie derivation of the form $D+F$, with $D$ being a derivation and $F$ a central-valued map, will be called {\it proper}. In this article we find a class of algebras on which every Lie derivation is proper.

In the AMS Hour Talk of 1961 Herstein proposed many problems concerning the structure of Jordan and Lie maps in associative simple and prime rings~\cite{Her}. Roughly speaking, he conjectured that these maps are all of the proper or standard form. The renowned Herstein's Lie-type mapping research program was formulated since then. Martindale gave a major force in this program under the assumption that the rings contain some nontrivial idempotents, see~\cite{Mar64} for example. The first idempotent-free result on Lie-type maps was obtained by Bre\v{s}ar in~\cite{Bre93}. We refer the reader to Bre\v{s}ar's survey paper~\cite{Bre04} for a much more detailed historic background.

Let us now recall another notion, incidence algebra~\cite{Kopp,SpDo}, with which we deal in this paper. Let $(X,\le)$ be a locally finite preordered set. This means that $\le$ is a reflexive and transitive binary relation on $X$, and for any $x\le y$ in $X$ there are only finitely many elements $z$ satisfying $x\le z\le y$. Given a commutative ring with identity $\cR$, the {\it incidence algebra} $I(X,\cR)$ of $X$ over $\cR$ is defined to be the set
$$
I(X,\cR):=\{f: X\times X\to \cR\mid f(x,y)=0\ \mbox{if}\ x\not\le y\}
$$
with algebraic operations given by
\begin{align*}
(f+g)(x,y)&=f(x,y)+g(x,y),\\
(rf)(x,y)&=rf(x,y),\\
(fg)(x,y)&=\sum_{x\le z\le y}f(x,z)g(z,y)
\end{align*}
for all $f,g\in I(X,\cR)$, $r\in \cR$ and $x,y\in X$. The product $fg$ is usually called the {\it convolution} in function theory. The identity element $\delta$ of $I(X,\cR)$ is given by $\delta(x,y)=\delta_{xy}$ for $x\le y$, where $\delta_{xy}\in \{0,1\}$ is the Kronecker delta. It is clear that the full matrix algebra ${\rm M}_n(\cR)$ and the upper triangular matrix algebra ${\rm T}_n(\cR)$ are special examples of incidence algebras.

The incidence algebra of a partially ordered set (poset) $X$ was first considered by Ward in~\cite{Wa} as a generalized algebra of arithmetic functions. Rota and Stanley developed incidence algebras as fundamental structures of enumerative combinatorial
theory and the allied areas of arithmetic function theory (see~\cite{Stanley}). Furthermore, Stanley~\cite{St} initiated the study of algebraic maps and combinatorial structure of an incidence algebra. Since then the automorphisms and other algebraic maps of incidence algebras have been extensively studied (see \cite{BruL,BruFS,Kh,Kh-aut,Kh-der,Kopp,Sp} and the references therein). On the other hand,
in the theory of operator algebras, the incidence algebra $I(X,\cR)$ of a finite poset $X$ is referred as a bigraph algebra or a finite dimensional CSL algebra. Therefore, one of the main motivations of this paper is to connect the Herstein's program to operator algebras in a combinatorial flavor.

The connection between the Herstein's program and operator algebras in analysis field has been studying for several decades. The operator algebras on which every Lie derivation is proper include von Neumann algebras~\cite{Miers}, certain CSL algebras~\cite{Lu}, nest algebras~\cite{Cheung}, $C^{\ast}$-algebras~\cite{MaV}, etc. However, Cheung's work~\cite{Cheung} is of special significance
in our present case. He viewed the nest algebra over a Hilbert space as a triangular algebra and hence avoided the analysis technique. This method makes it possible to study Lie derivations on incidence algebras in a combinatorial and linear manner (see also \cite{Ben-LieDer-TM,DuWang-LieDer-GMA,Ben-GenLieDer-TA,XiaoWei-LieTriple-TA}).

The content of this article is organized as follows. In \cref{fin-case} we study Lie derivations of the incidence algebra $I(X,\cR)$, when $X$ is a finite preordered set and $\cR$ is a $2$-torsion free commutative ring with identity. The case of $X$ being a locally finite preordered set will be considered in \cref{gen-case} by extending a Lie derivation of a subalgebra to the whole algebra $I(X,\cR)$.

\section{The finite case}\label{fin-case}

Throughout this article $\cR$ will always be a $2$-torsion free commutative ring with identity. We begin with the case, when $X$ is finite preordered set.

For each pair $x\le y$ define $e_{xy}$ by
\begin{align}\label{e_xy}
e_{xy}(u,v)=
 \begin{cases}
  1, & \mbox{if }(u,v)=(x,y),\\
  0, & \mbox{otherwise}.
 \end{cases}
\end{align}
Then $e_{xy}e_{uv}=\delta_{yu}e_{xv}$ by the definition of convolution. Moreover, the set $\mathfrak{B}:=\{e_{xy}\mid x\le y\}$ forms an $\cR$-linear basis of $I(X,\cR)$, which will be called {\it standard}.
The main result of this section is as follows.

\begin{thrm}\label{L-proper-fin-case}
If $X$ is finite, then each Lie derivation of $I(X,\cR)$ is proper.
\end{thrm}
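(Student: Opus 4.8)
The plan is to use the orthogonal idempotents $e_{xx}$ to realize $I(X,\cR)$ as a triangular algebra and then run a Peirce-decomposition argument in the spirit of Cheung, inducting on the number of equivalence classes of $X$. Write $x\sim y$ when $x\le y\le x$; the span of $\{e_{xy}:x\sim y\}$ is a direct product of full matrix algebras $\mathrm{M}_n(\cR)$, one for each $\sim$-class, and the span of the strict pairs $x<y$ is the Jacobson radical, so $I(X,\cR)$ is the sum of this semisimple ``diagonal'' and its radical.

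For the \emph{base case}, suppose $X$ consists of a single $\sim$-class, so $I(X,\cR)\cong\mathrm{M}_n(\cR)$; here I would invoke the classical description of Lie derivations of a full matrix algebra over a $2$-torsion-free commutative ring, according to which every such map is an inner derivation plus a trace-type central map vanishing on commutators. For the \emph{inductive step}, I would pick a nontrivial order ideal $S$ of the quotient poset $X/{\sim}$ and set $e=\sum_{x\in S}e_{xx}$ and $f=\delta-e$. Since $S$ is down-closed, no standard basis element $e_{xy}$ has $x\notin S$ and $y\in S$, whence $fI(X,\cR)e=0$ and $I(X,\cR)=\mathrm{Tri}(eI(X,\cR)e,\ eI(X,\cR)f,\ fI(X,\cR)f)$ is triangular, with the two corner algebras being incidence algebras of preordered sets having fewer classes.

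Given a Lie derivation $D$, I would then carry out the usual triangular reduction. Writing $D(e)$ in Peirce coordinates, the relations $[e,a]=a$ for $a\in eI(X,\cR)f$ force the diagonal components $eD(e)e$ and $fD(e)f$ to vanish (this is one of the places where $2$-torsion-freeness is used, as the identity delivers $2\,eD(e)e=0$ and similarly for $f$), and the remaining component lies in $eI(X,\cR)f$ and is removable by an inner derivation; after subtracting it I may assume $D(e)=0=D(f)$. With $e,f$ fixed, $D$ restricts to Lie derivations of the two corners and acts as a bimodule derivation on the cross term $eI(X,\cR)f$, the latter being pinned down by the eigenvector relations $[e,e_{xy}]=e_{xy}$. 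The inductive hypothesis makes the corner restrictions proper, and I would collect all the inner and corner-derivation contributions into a single derivation $d$.

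Finally I would set $F=D-d$ and verify that $F$ is $\cR$-linear, takes values in the center of $I(X,\cR)$ (spanned by the identity elements of the connected components of $X$), and annihilates every commutator, so that $D=d+F$ is proper. I expect the main obstacle to be precisely the matrix blocks arising from classes of size $\ge 2$: there $I(X,\cR)$ is genuinely non-triangular, the clean Cheung reduction does not apply, and one must splice in the matrix-algebra computation and check that the trace-type corrections it produces are globally central and agree across blocks. This matrix-block analysis, together with the symmetrized Lie identities on idempotents and diagonal matrix units, is also where the $2$-torsion-free hypothesis is genuinely needed, repeatedly turning relations of the form $2a=0$ into $a=0$.
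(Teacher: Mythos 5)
Your plan breaks down at its central step, the Peirce-decomposition normalization. For a \emph{Lie} derivation $D$, the relation $[e,a]=a$ for $a\in eI(X,\cR)f$ does \emph{not} force $eD(e)e=fD(e)f=0$. The identity you invoke, $2\,eD(e)e=0$, comes from expanding $D(e)=D(e\cdot e)$ by the Leibniz rule, which is available for derivations but not for Lie derivations ($e^2=e$ is not a Lie-algebraic identity). What $D(a)=[D(e),a]+[e,D(a)]$ actually yields is $eD(a)e=fD(a)f=0$ together with $(eD(e)e)\,a=a\,(fD(e)f)$ for all $a\in eI(X,\cR)f$: the diagonal part of $D(e)$ acts centrally on the off-diagonal corner, but it need not vanish, and it cannot be removed by subtracting any derivation, inner or otherwise. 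A counterexample lives already in $\mathrm{T}_2(\cR)=I(\{1<2\},\cR)$ with $\cR=\mathbb{Z}$: the map $L(g)=g(1,1)\delta$ is a Lie derivation (commutators in $\mathrm{T}_2(\cR)$ have zero diagonal, and $L$ is central-valued), yet $L(e_{11})=\delta=e_{11}+e_{22}$ has both diagonal Peirce components nonzero; since every genuine derivation $d$ satisfies $e_{11}d(e_{11})e_{11}=e_{22}d(e_{11})e_{22}=0$, no correction $L-d$ can achieve $(L-d)(e_{11})=0$. So the reduction ``I may assume $D(e)=0=D(f)$'' is unattainable; the diagonal of $D(e)$ is exactly the piece that must be carried along and eventually absorbed into the central summand $F$, and handling it is the real work in Cheung-style proofs, not a preliminary normalization.

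There is a second, independent gap in the inductive step, and it is not confined to the matrix blocks as you suggest. The Cheung-type argument needs center-compatibility hypotheses of the form $\pi_A(Z(\mathcal{T}))=Z(A)$ for the corner algebras, and an arbitrary order ideal $S$ violates them: for the connected poset $X=\{1,2,3\}$ with $1<3$, $2<3$ and $S=\{1,2\}$, the corner $eI(X,\cR)e\cong\cR\times\cR$ is a disconnected incidence algebra whose center is free of rank two, while the projection of $Z(I(X,\cR))=\cR\delta$ is only $\cR(e_{11}+e_{22})$. Hence the central-valued maps supplied by your inductive hypothesis on a corner take values that are not central in $I(X,\cR)$ (indeed, even for a connected corner, $Z(A)=\cR e$ is not central in the big algebra), and you must prove that this discrepancy can be re-absorbed into a derivation; this is where connectedness of $X$ has to be used, and your sketch never addresses it. The paper avoids both difficulties entirely by a direct computation on the standard basis: it pins down $L(e_{ij})$ coefficientwise (\cref{L(e_ij)-gen-form,coefficients-C^ij_xy}), observes that the off-diagonal coefficients define a derivation, and that the diagonal coefficients are constant along the connected set $X$, hence define a central-valued map. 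Your route could likely be repaired along the lines of \cite{Cheung}, but as written it fails at the step quoted above and omits the center-gluing argument that any such induction requires.
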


Observe that it is enough to prove \cref{L-proper-fin-case} for a connected $X$. Indeed, let $X=\bigsqcup_{i\in I}X_i$ be the decomposition of $X$ into distinct connected components, where $I$ is a finite index set. Denote by $\delta_i$ the element $\sum_{x\in X_i} e_{xx}\in I(X,\cR)$. It follows from~\cite[Theorem  1.3.13]{SpDo} that $\{\delta_i\mid i\in I\}$ forms
a complete set of central primitive idempotents of $I(X,\cR)$. In other words, $I(X,\cR)=\bigoplus_{i\in I}\delta_i
I(X,\cR)$. It is clear that $\delta_i I(X,\cR)\cong I(X_i,\cR)$ for each $i\in I$. Hence we only need to prove \cref{L-proper-fin-case} when $X$ is connected by \cite[Proposition 2]{Cheung}.

The following lemma, whose proof is standard (see, for example, \cite[Lemma 2.1]{Xiao1}), enables us to consider only the action of a Lie derivation on the basis $\mathfrak{B}$.

\begin{lem}\label{L-on-basis}
Let $A$ be an $\cR$-algebra with an $\cR$-linear basis $Y$. Then an $\cR$-linear operator
$L: A \to A$ is a Lie derivation if and only if
$$
L\left([x,y]\right)=[L(x),y]+[x,L(y)]
$$
for all  $x,y\in Y$.
\end{lem}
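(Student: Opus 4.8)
The plan is to prove the two implications separately, with essentially all the content living in the backward direction. The forward implication (the ``only if'') is immediate from the definition: if $L$ is a Lie derivation, then $L([x,y])=[L(x),y]+[x,L(y)]$ holds for \emph{all} $x,y\in A$, hence in particular for all $x,y$ ranging over the basis $Y$.

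For the converse I would introduce the \emph{defect} of $L$,
$$
\Delta(x,y):=L([x,y])-[L(x),y]-[x,L(y)],\qquad x,y\in A,
$$
so that $L$ is a Lie derivation precisely when $\Delta\equiv 0$ on $A\times A$, whereas the hypothesis supplies exactly $\Delta(x,y)=0$ for all $x,y\in Y$. The entire argument then hinges on one structural observation: $\Delta$ is $\cR$-bilinear.

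To establish this I would fix $y$ and read off that each of the three terms $L([x,y])$, $[L(x),y]$, $[x,L(y)]$ is $\cR$-linear in $x$: indeed $[\,\cdot\,,y]$ is $\cR$-linear in its first slot, and $L$ is $\cR$-linear by assumption, so the composites and their difference are linear in $x$; consequently $\Delta(\cdot,y)$ is $\cR$-linear. By the symmetric reasoning, using $\cR$-linearity of $[x,\,\cdot\,]$ in the second slot, $\Delta(x,\cdot)$ is $\cR$-linear for each fixed $x$. Hence $\Delta\colon A\times A\to A$ is $\cR$-bilinear.

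It then remains to expand in the basis. Since $Y$ is an $\cR$-linear basis, any $x,y\in A$ can be written as finite combinations $x=\sum_i r_i y_i$ and $y=\sum_j s_j z_j$ with $r_i,s_j\in\cR$ and $y_i,z_j\in Y$, and bilinearity yields
$$
\Delta(x,y)=\sum_{i,j} r_i s_j\,\Delta(y_i,z_j)=0,
$$
the last equality being precisely the hypothesis applied to each basis pair $(y_i,z_j)$. This gives $\Delta\equiv 0$, i.e.\ $L$ is a Lie derivation, completing the converse. I do not expect a genuine obstacle here: this is a standard ``verify on a basis'' lemma, and its only real content is the bilinearity of $\Delta$. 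The single point warranting care is that the reduction genuinely needs $\Delta$ to be bilinear, linear in each argument separately, so that the double expansion over both decompositions is legitimate; this is exactly what the preceding paragraph secures.
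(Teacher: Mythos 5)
Your proof is correct and is precisely the standard argument the paper has in mind: the paper omits the proof as routine, citing \cite[Lemma 2.1]{Xiao1}, and that standard proof is exactly your bilinearity-of-the-defect reduction to basis pairs. Nothing is missing — the $\cR$-bilinearity of $\Delta$, which follows from the linearity of $L$ and the bilinearity of the Lie bracket in an $\cR$-algebra, is indeed the lemma's only content.
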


% \begin{proof}
% We only need to prove the sufficiency. Let $u, v\in A$ and
% $$
% u=\sum_{x\in Y}C_{x}x,\quad v=\sum_{y\in Y}C'_{y}y,
% $$
% where $C_x, C'_y\in\cR$. Then
% $$\begin{aligned}
% L\left([u,v]\right)&=L\left([\sum_{x\in Y}C_{x}x,\sum_{y\in Y}C'_{y}y] \right)\\
% &=\sum_{x,y\in Y}C_{x}C'_{y}\left([L(x),y]+[x,L(y)]\right)\\
% &=[L(\sum_{x\in Y}C_{x}x),\sum_{y\in Y}C'_{y}y]+[\sum_{x\in Y}C_{x}x,L(\sum_{y\in Y}C'_{y}y)]\\
% &=[L(u),v]+[u,L(v)].
% \end{aligned}$$
% \end{proof}

Let $L:I(X,\cR)\to I(X,\cR)$ be an $\cR$-map. Motivated by the above lemma, for all $i\le j$ we denote by $C_{xy}^{ij}$ the coordinates of $L(e_{ij})$ in the basis $\mathfrak B$, namely,
$$
L(e_{ij})=\sum_{x\le y}C_{xy}^{ij}e_{xy}.
$$
We make the convention $C_{xy}^{ij}=0$, if needed, for $x\not\le y$. We shall also use the standard notation $i<j$ or $j>i$, if $i\le j$ and $i\ne j$.

% We also define $L_x$ and $R_x$ to be the subsets of $X$ given by
% $$
% L_x=\{i\in X\mid i<x\}\ \mbox{and}\ R_x=\{j\in X\mid x<j\}.
% $$
% Here and below $i<j$ means $i\le j$ and $i\ne j$. Note that $L_x\cap R_x$ may be nonempty since $\le$ is not necessarily antisymmetric.

\begin{lem}\label{L(e_ij)-gen-form}
Let $X$ be connected and $L$ be a Lie derivation of $I(X,\cR)$. Then
\begin{align}
L(e_{ii})&=\sum_{x<i}C_{xi}^{ii}e_{xi}+\sum_{x\in X}C_{xx}^{ii}e_{xx}+\sum_{y>i}C_{iy}^{ii}e_{iy},\label{L(e_ii)-final}\\
L(e_{ij})&=\sum_{x<i}C_{xi}^{ii}e_{xj}+C_{ij}^{ij}e_{ij}+\sum_{y>j}C_{jy}^{jj}e_{iy},\ \mbox{if}\ i<j.\label{L(e_ij)-final}
\end{align}
\end{lem}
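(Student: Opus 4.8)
The plan is to use \cref{L-on-basis} and argue entirely by expanding the Lie derivation identity on pairs of standard basis elements and comparing coordinates in $\mathfrak B$. The only computational inputs are the product rule $e_{ab}e_{cd}=\delta_{bc}e_{ad}$ and the resulting commutator $[e_{ab},e_{cd}]=\delta_{bc}e_{ad}-\delta_{da}e_{cb}$; in particular $[e_{ii},e_{jj}]=0$ for all $i,j$, while $[e_{ii},e_{ij}]=e_{ij}$ and $[e_{jj},e_{ij}]=-e_{ij}$ whenever $i<j$. The recurring principle is that, once both sides of an instance of the identity are written in the basis, any coordinate forced to appear on one side but not the other must vanish, and the matching coordinates yield relations among the $C^{ij}_{xy}$.

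I would prove \eqref{L(e_ii)-final} first. Applying the identity to the commuting pair $e_{ii},e_{jj}$ gives $[L(e_{ii}),e_{jj}]+[e_{ii},L(e_{jj})]=0$ for each $j\neq i$. Expanding, the first bracket is supported on column $j$ and row $j$ (entries $e_{xj}$ with $x<j$ and $e_{jy}$ with $y>j$), whereas the second is supported on row $i$ and column $i$. These supports overlap only in the $(i,j)$- and $(j,i)$-entries, so every coordinate of $L(e_{ii})$ sitting in column $j$ or row $j$ but outside row and column $i$ is killed. Letting $j$ run over all indices $\neq i$ (taking $j=y$ to hit an arbitrary off-diagonal position $e_{xy}$ with $x\neq i\neq y$) annihilates exactly the coordinates $C^{ii}_{xy}$ with $x\neq y$ and $x,y\neq i$, leaving the diagonal together with row $i$ and column $i$, which is \eqref{L(e_ii)-final}.

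The substance of the lemma is \eqref{L(e_ij)-final}, which I would obtain by playing the two relations $e_{ij}=[e_{ii},e_{ij}]$ and $-e_{ij}=[e_{jj},e_{ij}]$ against each other, now that \eqref{L(e_ii)-final} fixes the shape of $L(e_{ii})$ and of $L(e_{jj})$. From the first,
\[
L(e_{ij})=[L(e_{ii}),e_{ij}]+[e_{ii},L(e_{ij})],
\]
and substituting \eqref{L(e_ii)-final} shows the right side is supported on $\{e_{xj}:x<i\}\cup\{e_{ij}\}\cup\{e_{iv}:v>i\}\cup\{e_{ui}:u<i\}$, with the column-$j$ coordinates read off as $C^{ij}_{xj}=C^{ii}_{xi}$ for $x<i$. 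Symmetrically, the second relation together with \eqref{L(e_ii)-final} for $j$ expresses $L(e_{ij})$ as a sum supported on $\{e_{ij}\}\cup\{e_{iy}:y>j\}\cup\{e_{jv}:v>j\}\cup\{e_{uj}:u<j\}$, with $C^{ij}_{iy}=C^{jj}_{jy}$ for $y>j$. As both sums equal $L(e_{ij})$, its support lies in their intersection, which is precisely $\{e_{xj}:x<i\}\cup\{e_{ij}\}\cup\{e_{iy}:y>j\}$, and the surviving coordinates are those already identified; this is \eqref{L(e_ij)-final}.

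The main obstacle is this last step, because each of the two expressions for $L(e_{ij})$ is self-referential: its right-hand side still contains unknown coordinates of $L(e_{ij})$ itself, coming from the terms $[e_{ii},L(e_{ij})]$ and $[e_{jj},L(e_{ij})]$. Neither expression alone pins down $L(e_{ij})$. The resolution is that these self-referential terms are supported on row and column $i$ in the first case and on row and column $j$ in the second, so passing to the intersection of the two supports eliminates them and isolates the genuinely new data, namely the inherited coordinates $C^{ii}_{xi}$ and $C^{jj}_{jy}$. Some care is needed around the $(i,j)$-entry, where the diagonal coordinates of $L(e_{ii})$ and $L(e_{jj})$ also contribute; matching that coordinate yields the side identities $C^{ii}_{ii}=C^{ii}_{jj}$ and $C^{jj}_{ii}=C^{jj}_{jj}$ but leaves $C^{ij}_{ij}$ undetermined, exactly as the statement requires. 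Notably, this determination rests only on linear independence in $\mathfrak B$ and uses the $2$-torsion-freeness of $\cR$ nowhere.
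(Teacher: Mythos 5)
Your argument for \cref{L(e_ii)-final} is correct (and even streamlines the paper's treatment: by working only with the commuting idempotents $e_{jj}$ you avoid the paper's detour through the opposite preorder for the case where $i$ only has elements below it). The gap is in \cref{L(e_ij)-final}, and it is exactly the point the paper's proof is built around: $X$ is a \emph{preordered} set, not a poset, so $i<j$ does not exclude $j<i$; the two elements may lie on a common cycle $i\le j\le i$ with $i\ne j$. Your computation of the intersection of the two supports silently assumes $j\not\le i$. If $j<i$, then $e_{ji}$ belongs to your Support 1 (it is $e_{ui}$ with $u=j<i$, coming from the self-referential bracket $[e_{ii},L(e_{ij})]$, which lives on row and column $i$) \emph{and} to your Support 2 (it is $e_{jv}$ with $v=i>j$, coming from $[L(e_{ij}),e_{jj}]$, which lives on row and column $j$). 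So the intersection is strictly larger than the set $\{e_{xj}:x<i\}\cup\{e_{ij}\}\cup\{e_{iy}:y>j\}$ you claim: your principle that ``passing to the intersection eliminates the self-referential terms'' fails at the position $(j,i)$, which survives both cuts, and your argument never eliminates the coefficient $C^{ij}_{ji}$. The paper handles precisely this: it compares the two bracketings $[[e_{ii},e_{ij}],e_{jj}]$ and $[e_{ii},[e_{ij},e_{jj}]]$ to control the $e_{ii}$- and $e_{jj}$-coefficients, and then kills $C^{ij}_{ji}$ from the identity $C^{ij}_{ji}=-C^{ij}_{ji}$, which is where $2$-torsion-freeness enters.

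Your closing claim that $2$-torsion-freeness is used nowhere is therefore a symptom of the gap, not a strengthening: the lemma is false without that hypothesis. Take $X=\{1,2\}$ with $1\le 2\le 1$, so $I(X,\cR)\cong{\rm M}_2(\cR)$, and $\cR=\mathbb{Z}/2\mathbb{Z}$. The linear map $L$ with $L(e_{11})=L(e_{22})=0$, $L(e_{12})=e_{21}$, $L(e_{21})=e_{12}$ is a Lie derivation (check it on pairs from $\mathfrak{B}$ and apply \cref{L-on-basis}), yet \cref{L(e_ij)-final} would force $L(e_{12})=C^{11}_{21}e_{22}+C^{12}_{12}e_{12}+C^{22}_{21}e_{11}=C^{12}_{12}e_{12}$, contradicting $L(e_{12})=e_{21}$. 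The repair within your framework is to supplement the support argument by coefficient comparisons at the cycle positions: reading $L(e_{ij})=[L(e_{ii}),e_{ij}]+[e_{ii},L(e_{ij})]$ at the entry $(j,i)$ gives $C^{ij}_{ji}=-C^{ij}_{ji}$ (the only contribution on the right comes from $-L(e_{ij})e_{ii}$), whence $C^{ij}_{ji}=0$ by $2$-torsion-freeness; and reading your two relations at the entry $(i,i)$ gives $C^{ij}_{ii}=-C^{ii}_{ji}$ and $C^{ij}_{ii}=C^{jj}_{ji}$, recovering the relation $C^{ii}_{ji}+C^{jj}_{ji}=0$ that makes the $e_{ii}$- and $e_{jj}$-coefficients come out as \cref{L(e_ij)-final} asserts. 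With these additions (needed only when $j<i$) your route closes; as written, it proves the lemma only for posets.
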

\begin{proof}
Suppose first that $|X|=1$. Then \cref{L(e_ii)-final} takes the trivial form $L(e_{ii})=C_{ii}^{ii}e_{ii}$, where $i$ is the unique element of $X$. Since there is no pair of $i<j$ in $X$, formula \cref{L(e_ij)-final} also trivially holds.

Let $|X|\ge 2$ and $i\in X$. Since $X$ is connected, there is an element $j\ne i$ comparable with $i$. Consider first the case $i<j$.
As $e_{ij}=[e_{ii},e_{ij}]$, we have
\begin{align}\label{L(e_ij)=[L(e_ii)e_ij]+[e_iiL(e_ij)]}
L(e_{ij})=[L(e_{ii}),e_{ij}]+[e_{ii},L(e_{ij})]=L(e_{ii})e_{ij}-e_{ij}L(e_{ii})+e_{ii}L(e_{ij})-L(e_{ij})e_{ii}.
\end{align}
Multiplying \cref{L(e_ij)=[L(e_ii)e_ij]+[e_iiL(e_ij)]} by $e_{ii}$ on the left and by $e_{yy}$ on the right, we get
\begin{align}
C^{ii}_{ii}&=C^{ii}_{jj},\label{C^ii_ii=C^ii_jj}\\
C^{ii}_{jy}&=0,\ \mbox{if} \ y\ne i,j.\label{C^ii_jy=0}
\end{align}
Applying the above relations \cref{C^ii_ii=C^ii_jj,C^ii_jy=0} we obtain
\begin{align}
L(e_{ii})&=\sum_{x\le y}C_{xy}^{ii}e_{xy}=\sum_{j\ne x\le y}C_{xy}^{ii}e_{xy}+C_{ji}^{ii}e_{ji}+C_{jj}^{ii}e_{jj}\notag\\
&=\sum_{j\ne x<i}C_{xi}^{ii}e_{xi}+C_{ii}^{ii}e_{ii}
 +\sum_{j\ne x\le y\ne i}C_{xy}^{ii}e_{xy}+C_{ji}^{ii}e_{ji}+C_{jj}^{ii}e_{jj}\notag\\
&=\sum_{x<i}C_{xi}^{ii}e_{xi}+C_{ii}^{ii}(e_{ii}+e_{jj})+\sum_{y>i}C_{iy}^{ii}e_{iy}
 +\sum_{i,j\ne x\le y\ne i}C_{xy}^{ii}e_{xy}.\label{L(e_ii)-not-simplified}
\end{align}
On the other hand, $L\left([e_{ii},e_{xx}]\right)=0$ for any $x\in X$. Thus
$$
L(e_{ii})e_{xx}-e_{xx}L(e_{ii})+e_{ii}L(e_{xx})-L(e_{xx})e_{ii}=0.
$$
Left multiplication by $e_{xx}$ and right multiplication by $e_{yy}$ in the above equation leads to
$$
C^{ii}_{xy}=0,\ \mbox{if}\ i\ne x<y\ne i.
$$
Therefore the equation \cref{L(e_ii)-not-simplified} can be rewritten as
$$
L(e_{ii})=\sum_{x<i}C_{xi}^{ii}e_{xi}+C_{ii}^{ii}(e_{ii}+e_{jj})
 +\sum_{y>i}C_{iy}^{ii}e_{iy}+\sum_{x\neq i,j}C_{xx}^{ii}e_{xx},
$$
giving \cref{L(e_ii)-final}.

Suppose now that $j<i$. Then in the dual poset $(X^{op},\le^{op})$ (see \cite[p. 2]{SpDo}) one has $i<^{op}j$. Observe that $I(X^{op},\cR)$ is anti-isomorphic to $I(X,\cR)$, the anti-isomorphism being the map $\varphi(e^{op}_{ij})=e_{ji}$, where $e^{op}_{ij}$ is an element of the standard basis of $I(X^{op},\cR)$. Then $L^{op}(e^{op}_{ij})=\varphi^{-1}(L(e_{ji}))$ defines a Lie derivation on $I(X^{op},\cR)$, more precisely $L^{op}(e^{op}_{ij})=\sum_{x\le y}C^{ji}_{xy}e^{op}_{yx}=\sum_{x\le^{op} y}C^{ji}_{yx}e^{op}_{xy}$. Applying the result of the previous case to $L^{op}$, we obtain
$$
 L^{op}(e^{op}_{ii})=\sum_{x<^{op}i}C_{ix}^{ii}e^{op}_{xi}+\sum_{x\in X}C_{xx}^{ii}e^{op}_{xx}+\sum_{y>^{op}i}C_{yi}^{ii}e^{op}_{iy}.
$$
Then
$$
L(e_{ii})=\varphi(L^{op}(e^{op}_{ii}))=\sum_{x<^{op}i}C_{ix}^{ii}e_{ix}+\sum_{x\in X}C_{xx}^{ii}e_{xx}+\sum_{y>^{op}i}C_{yi}^{ii}e_{yi}.
$$
Changing $x$ by $y$ in the first sum, $y$ by $x$ in the third sum, and observing that $x<^{op}i\iff x>i$, and $y>^{op}i\iff y<i$, we obtain \cref{L(e_ii)-final}.

We next describe the form of $L(e_{ij})$ for any $i<j$. Observe first that left multiplication by $e_{xx}$ and right multiplication by $e_{jj}$ in $L(e_{ij})=L\left([e_{ij},e_{jj}]\right)$ leads to
\begin{align}
C^{jj}_{jj}&=C^{jj}_{ii},\label{C^jj_jj=C^jj_ii}\\
C^{jj}_{xi}&=0,\  \mbox{if} \ x\neq i,j.\label{C_xi^jj=0}
\end{align}
Furthermore, $e_{ij}=[[e_{ii},e_{ij}],e_{jj}]$. Then it follows from \cref{C^ii_ii=C^ii_jj,C^ii_jy=0,L(e_ii)-final,C^jj_jj=C^jj_ii,C_xi^jj=0} that
\begin{align}
L(e_{ij})&=[[L(e_{ii}),e_{ij}],e_{jj}]+[[e_{ii},L(e_{ij})],e_{jj}]+[e_{ij},L(e_{jj})]\notag\\
&=[[\sum_{x<i}C_{xi}^{ii}e_{xi}+\sum_{x\in X}C_{xx}^{ii}e_{xx}+\sum_{y>i}C_{iy}^{ii}e_{iy},e_{ij}],e_{jj}]\notag\\
&\quad +[[e_{ii},\sum_{x\le y}C_{xy}^{ij}e_{xy}],e_{jj}]\notag\\
&\quad +[e_{ij},\sum_{x<j}C_{xj}^{jj}e_{xj}+\sum_{x\in X}C_{xx}^{jj}e_{xx}+\sum_{y>j}C_{jy}^{jj}e_{jy}]\notag\\
&=\sum_{x<i}C_{xi}^{ii}e_{xj}+C_{ij}^{ij}e_{ij}-\left(C_{ji}^{ii}+C_{ji}^{jj}\right)e_{jj}+C_{ji}^{ij}e_{ji}+\sum_{y>j}C_{jy}^{jj}e_{iy}.\label{L([[e_iie_ij]e_jj])}
\end{align}
Analogously from $e_{ij}=[e_{ii},[e_{ij},e_{jj}]]$ we deduce
\begin{align}
L(e_{ij})=\sum_{x<i}C_{xi}^{ii}e_{xj}+C_{ij}^{ij}e_{ij}
 -\left(C_{ji}^{ii}+C_{ji}^{jj}\right)e_{ii}+C_{ji}^{ij}e_{ji}
 +\sum_{y>j}C_{jy}^{jj}e_{iy}.\label{L([e_ii[e_ije_jj]])}
\end{align}
Combining \cref{L([[e_iie_ij]e_jj])} with \cref{L([e_ii[e_ije_jj]])} yields that
\begin{align}\label{C_ji^ii+C_ji^jj=0}
 C_{ji}^{ii}+C_{ji}^{jj}=0,
\end{align}
since $i\neq j$. On the other hand, left multiplication by $e_{jj}$ and right multiplication by $e_{ii}$ in \cref{L(e_ij)=[L(e_ii)e_ij]+[e_iiL(e_ij)]} leads to $C^{ij}_{ji}=-C^{ij}_{ji}$, which implies that $C^{ij}_{ji}=0$, since $\cR$ is $2$-torsion free. Therefore, we get the desired form \cref{{L(e_ij)-final}}.
\end{proof}

\begin{lem}\label{coefficients-C^ij_xy}
Let $X$ be connected. An $\cR$-linear map $L: I(X,\cR)\to I(X,\cR)$ of the form \cref{L(e_ii)-final,L(e_ij)-final} is a Lie derivation of $I(X,\cR)$ if and only if the coefficients $C_{xy}^{ij}$ are subject to the following relations
\begin{align}
C_{ij}^{ii}+C_{ij}^{jj}&=0, \ \mbox{if}\ i<j,\label{C_ij^ii+C_ij^jj=0}\\
C_{ij}^{ij}+C_{jk}^{jk}&=C_{ik}^{ik}, \  \mbox{if}\ i<j<k\ \mbox{and}\ i\ne k,\label{C_ij^ij+C_jk^jk=C_ik^ik}\\
C_{ij}^{ij}+C_{ji}^{ji}&=0, \  \mbox{if}\ i<j<i,\label{C_ij^ij+C_ji^ji=0}\\
C_{ii}^{ii}&=C_{xx}^{ii}, \ \mbox{for all}\ x\in X.\label{C_ii^ii=C_xx^ii}
\end{align}
\end{lem}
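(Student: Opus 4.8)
By \cref{L-on-basis}, the map $L$ is a Lie derivation precisely when $L([e_{ij},e_{kl}])=[L(e_{ij}),e_{kl}]+[e_{ij},L(e_{kl})]$ holds for every pair of basis elements $e_{ij},e_{kl}\in\mathfrak B$. Since $e_{ij}e_{kl}=\delta_{jk}e_{il}$, one has $[e_{ij},e_{kl}]=\delta_{jk}e_{il}-\delta_{li}e_{kj}$, so the left-hand side is nonzero only in a handful of index configurations, and the whole verification splits into finitely many cases according to the coincidences among $i,j,k,l$. My plan is to treat both implications together: for each configuration I compute the two sides of the identity directly from \cref{L(e_ii)-final,L(e_ij)-final} and read off exactly which of \cref{C_ij^ii+C_ij^jj=0,C_ij^ij+C_jk^jk=C_ik^ik,C_ij^ij+C_ji^ji=0,C_ii^ii=C_xx^ii} the resulting equality is equivalent to.

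For necessity I would feed the identity the following test pairs. Taking $(e_{ii},e_{jj})$ with $i<j$, the bracket vanishes while the right-hand side collapses to $(C_{ij}^{ii}+C_{ij}^{jj})e_{ij}$, yielding \cref{C_ij^ii+C_ij^jj=0}. Taking $(e_{ij},e_{jk})$ with $i<j<k$ and $i\ne k$, the bracket equals $e_{ik}$, and after cancelling the common terms $\sum_{x<i}C_{xi}^{ii}e_{xk}$ and $\sum_{y>k}C_{ky}^{kk}e_{iy}$ the coefficient of $e_{ik}$ gives \cref{C_ij^ij+C_jk^jk=C_ik^ik}. To obtain \cref{C_ii^ii=C_xx^ii} I would test $(e_{ii},e_{jk})$ with $j<k$ and $i\notin\{j,k\}$: the bracket is zero and the coefficient of $e_{jk}$ on the right forces $C_{jj}^{ii}=C_{kk}^{ii}$; combining this with \cref{C^ii_ii=C^ii_jj} and walking along a path joining $i$ to an arbitrary $x$ (which exists because $X$ is connected) propagates the equality to all of $X$. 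Finally, the degenerate analogue of the second pair, $(e_{ij},e_{ji})$ with $i<j<i$, produces the bracket $e_{ii}-e_{jj}$; here the off-diagonal contributions match automatically, the diagonal part reduces to $(C_{ij}^{ij}+C_{ji}^{ji})(e_{ii}-e_{jj})=\sum_{x}(C_{xx}^{ii}-C_{xx}^{jj})e_{xx}$, and inserting \cref{C_ii^ii=C_xx^ii} together with $2$-torsion freeness of $\cR$ yields \cref{C_ij^ij+C_ji^ji=0}.

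For sufficiency I would assume \cref{C_ij^ii+C_ij^jj=0,C_ij^ij+C_jk^jk=C_ik^ik,C_ij^ij+C_ji^ji=0,C_ii^ii=C_xx^ii} and run through every index configuration, checking that the two sides of the Leibniz identity agree. Most configurations are exactly the pairs treated above, now read in reverse. The remaining pairs are those whose bracket is zero; for each of them one expands the right-hand side and checks that the surviving coefficients cancel, the cancellations being governed by \cref{C_ij^ii+C_ij^jj=0,C_ii^ii=C_xx^ii}. For example, for $(e_{ii},e_{kk})$ the right-hand side simplifies to $(C_{ik}^{ii}+C_{ik}^{kk})e_{ik}-(C_{ki}^{ii}+C_{ki}^{kk})e_{ki}$, both coefficients of which vanish by \cref{C_ij^ii+C_ij^jj=0}.

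I expect the main obstacle to be the bookkeeping in the presence of equivalent elements, i.e.\ pairs $x<y<x$ with $x\ne y$. When $i$ and $j$ are equivalent, both $e_{ij}$ and $e_{ji}$ lie in $\mathfrak B$, and the sums in \cref{L(e_ii)-final,L(e_ij)-final} acquire extra terms (for instance $e_{ij}$ then occurs inside $\sum_{y>i}C_{iy}^{ii}e_{iy}$), so that many products which are zero in the poset case now survive. Keeping track of these terms and separating the genuinely diagonal contributions from the off-diagonal ones is the delicate part; it is also where the hypothesis that $\cR$ is $2$-torsion free enters, for instance when $X$ consists of a single pair of equivalent elements.
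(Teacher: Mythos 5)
Your strategy is the same as the paper's: reduce to pairs of basis elements via \cref{L-on-basis} and split into cases according to the coincidences among $i,j,k,l$. Your necessity derivations match the paper's almost pair for pair: the paper obtains \cref{C_ii^ii=C_xx^ii} from the pairs $(e_{ii},e_{kl})$ with $i\in\{k,l\}$ (whose bracket is nonzero) rather than by quoting \cref{C^ii_ii=C^ii_jj}, but the two routes carry the same information, and your treatment of $(e_{ij},e_{ji})$ with $i<j<i$ is exactly the paper's Case 2.4, including the use of $2$-torsion freeness.

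The genuine gap is in the sufficiency direction, at precisely the spot you flagged as delicate. The computation for the pair $(e_{ij},e_{ji})$, $i<j<i$, does \emph{not} read in reverse: what it actually shows is that, granted \cref{C_ii^ii=C_xx^ii}, the Leibniz identity for this pair is equivalent to \cref{C_ij^ij+C_ji^ji=0} together with the extra relation $C_{ii}^{ii}=C_{jj}^{jj}$ (the paper's \cref{C_ii^ii=C_jj^jj}). That extra relation is not among \cref{C_ij^ii+C_ij^jj=0,C_ij^ij+C_jk^jk=C_ik^ik,C_ij^ij+C_ji^ji=0,C_ii^ii=C_xx^ii} and does not follow from them: \cref{C_ij^ii+C_ij^jj=0} involves only off-diagonal subscripts, \cref{C_ij^ij+C_jk^jk=C_ik^ik,C_ij^ij+C_ji^ji=0} only the coefficients $C^{ij}_{ij}$, and \cref{C_ii^ii=C_xx^ii} only a single superscript, so nothing ever links $C_{ii}^{ii}$ to $C_{jj}^{jj}$. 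Concretely, let $X=\{1,2\}$ with $1\le 2\le 1$, so that $I(X,\cR)\cong{\rm M}_2(\cR)$, and set $L(e_{11})=e_{11}+e_{22}$, $L(e_{22})=L(e_{12})=L(e_{21})=0$. This $L$ has the form \cref{L(e_ii)-final,L(e_ij)-final} and satisfies all four relations, yet $L([e_{12},e_{21}])=L(e_{11})-L(e_{22})=\delta\neq 0=[L(e_{12}),e_{21}]+[e_{12},L(e_{21})]$, so it is not a Lie derivation. Hence your sufficiency check would fail at this one configuration (every other configuration does reverse as you describe). You are in good company: the paper's own proof has the same defect, since it derives \cref{C_ii^ii=C_jj^jj} as a necessary condition but omits it from the lemma's list and never verifies the ``if'' direction in Case 2.4. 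Both your proof and the paper's (and the statement itself) are repaired by adding $C_{ii}^{ii}=C_{jj}^{jj}$ for $i<j<i$ as a fifth relation; when $X$ is a poset this case is vacuous and your argument is complete. Note also that only the necessity direction is used in the proof of \cref{L-proper-fin-case}, so the paper's main results are unaffected by this issue.
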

\begin{proof}
By \cref{L-on-basis} in order to determine that $L$ is a Lie derivation, it is necessary and sufficient to show that $L([e_{ij},e_{kl}])=[L(e_{ij}),e_{kl}]+[e_{ij},L(e_{kl})]$ for all $i\le j$ and $k\le l$. Since $[e_{ij},e_{kl}]=\delta_{jk}e_{il}-\delta_{li}e_{kj}$, there are two cases occurring.

{\bf Case 1.} If $i=j$, then one has two subcases.

{\it Case 1.1.} When $k=l$, we have by \cref{L(e_ii)-final}
\begin{align}
0&=L\left([e_{ii},e_{kk}]\right)\notag\\
&=L(e_{ii})e_{kk}-e_{kk}L(e_{ii})+e_{ii}L(e_{kk})-L(e_{kk})e_{ii}\notag\\
&=\delta_{ik}\sum_{x<i}C_{xi}^{ii}e_{xk}-\delta_{ik}\sum_{y>i}C_{iy}^{ii}e_{ky}
 +\delta_{ik}\sum_{y>k}C_{ky}^{kk}e_{iy}\notag\\
&\quad -\delta_{ik}\sum_{x<k}C_{xk}^{kk}e_{xi}+\left(C_{ik}^{ii}+C_{ik}^{kk}\right)e_{ik}-\left(C_{ki}^{ii}+C_{ki}^{kk}\right)e_{ki}.\label{0=L([e_iie_kk])}
\end{align}
If $i\ne k$ and $i,k$ are comparable, then \cref{0=L([e_iie_kk])} is equivalent to $C_{ik}^{ii}+C_{ik}^{kk}=0$ for $i<k$ and $C_{ki}^{ii}+C_{ki}^{kk}=0$ for $k<i$, so we get \cref{{C_ij^ii+C_ij^jj=0}}. If $i=k$ or $i,k$ are incomparable, then \cref{0=L([e_iie_kk])} always holds.

{\it Case 1.2.} When $k\neq l$, we have
\begin{align}
0&=L\left([e_{ii},e_{kl}]\right)-\delta_{ik}L(e_{il})+\delta_{il}L(e_{ki})\notag\\
&=L(e_{ii})e_{kl}-e_{kl}L(e_{ii})+e_{ii}L(e_{kl})-L(e_{kl})e_{ii}\notag\\
&\quad -\delta_{ik}L(e_{il})+\delta_{il}L(e_{ki}).\label{0=L([e_iie_kl])-d_ikL(e_il)+d_ilL(e_ki)}
\end{align}
If $k\ne i\ne l$, then by \cref{L(e_ii)-final,L(e_ij)-final} equality \cref{0=L([e_iie_kl])-d_ikL(e_il)+d_ilL(e_ki)} can be rewritten as
\begin{align*}
0&=L(e_{ii})e_{kl}-e_{kl}L(e_{ii})+e_{ii}L(e_{kl})-L(e_{kl})e_{ii}\\
&=\left(C_{kk}^{ii}-C_{ll}^{ii}\right)e_{kl}+\left(C_{ik}^{ii}+C_{ik}^{kk}\right)e_{il}-\left(C_{li}^{ii}+C_{li}^{ll}\right)e_{ki}.
\end{align*}
Hence,
\begin{align}\label{C_kk^ii=C_ll^ii-for-k-ne-i-ne-l}
 C_{kk}^{ii}=C_{ll}^{ii}\mbox{ for } k<l\mbox{ and }k\neq i\neq l,
\end{align}
$C_{ik}^{ii}+C_{ik}^{kk}=0$ for $i<k$, $C_{li}^{ii}+C_{li}^{ll}=0$ for $l<i$. If $k\ne i=l$, then \cref{0=L([e_iie_kl])-d_ikL(e_il)+d_ilL(e_ki)} can be rewritten as
\begin{align*}
0&=L(e_{ii})e_{kl}-e_{kl}L(e_{ii})+e_{ii}L(e_{kl})-L(e_{kl})e_{ii}+L(e_{ki})\\
&=\left(C_{kk}^{ii}-C_{ii}^{ii}\right)e_{ki}+\left(C_{ik}^{ii}+C_{ik}^{kk}\right)e_{ii}.
\end{align*}
This gives
\begin{align}\label{C_kk^ii=C_ii^ii-for-k<i}
 C_{kk}^{ii}=C_{ii}^{ii}\mbox{ for }k<i,
\end{align}
and $C_{ik}^{ii}+C_{ik}^{kk}=0$ for $i<k<i$. If $k=i\ne l$, then, similarly, we have
\begin{align}\label{C_ii^ii=C_ll^ii-for-i<l}
 C_{ii}^{ii}=C_{ll}^{ii}\mbox{ for }i<l,
\end{align}
and $C_{li}^{ii}+C_{li}^{ll}=0$, for $l<i<l$. Combining \cref{C_kk^ii=C_ll^ii-for-k-ne-i-ne-l,C_kk^ii=C_ii^ii-for-k<i,C_ii^ii=C_ll^ii-for-i<l}, we get
\begin{align}\label{C^ii_xx=C^ii_yy-for-x-le-y}
 C^{ii}_{xx}=C^{ii}_{yy} \mbox{ for all }x\le y.
\end{align}
Since $X$ is connected, there is a path from $i$ to any $x\in X$. Using \cref{C^ii_xx=C^ii_yy-for-x-le-y} and induction on the length of the path, we obtain \cref{C_ii^ii=C_xx^ii}.

{\bf Case 2.} If $i\neq j$, we can also assume that $k\neq l$ (the case $k=l$ is symmetric to {\it Case 1.2}) and
there are four subcases appearing.

{\it Case 2.1.} When $i\neq l$ and $j\neq k$, using \cref{L(e_ii)-final,L(e_ij)-final} we have
\begin{align}
0&=[L(e_{ij}),e_{kl}]+[e_{ij},L(e_{kl})]\notag\\
&=\left(C_{jk}^{jj}+C_{jk}^{kk}\right)e_{il}-\left(C_{li}^{ii}+C_{li}^{ll}\right)e_{kj}.\label{0=[L(e_ij)e_kl]+[e_ijL(e_kl)]}
\end{align}
If $i\ne k$ or $j\ne l$, we get the relations $C_{jk}^{jj}+C_{jk}^{kk}=0$ for $j<k$, and $C_{li}^{ii}+C_{li}^{ll}=0$ for $l<i$, which are \cref{C_ij^ii+C_ij^jj=0}. If $i=k$ and $j=l$, then \cref{0=[L(e_ij)e_kl]+[e_ijL(e_kl)]} always holds.

{\it Case 2.2.} When $i=l$ and $j\neq k$, we have by \cref{L(e_ij)-final}
\begin{align}
-L(e_{kj})&=[L(e_{ij}),e_{ki}]+[e_{ij},L(e_{ki})]\notag\\
&=C_{jk}^{jj}e_{ii}-C_{ij}^{ij}e_{kj}-\sum_{y>j}C_{jy}^{jj}e_{ky}\notag\\
&\quad +C_{jk}^{kk}e_{ii}-\sum_{x<k}C_{xk}^{kk}e_{xj}-C_{ki}^{ki}e_{kj}\notag\\
&=-\sum_{x<k}C_{xk}^{kk}e_{xj}-\left(C_{ij}^{ij}+C_{ki}^{ki}\right)e_{kj}-\sum_{y>j}C_{jy}^{jj}e_{ky}+\left(C_{jk}^{jj}+C_{jk}^{kk}\right)e_{ii}.\label{-L(e_kj)=[L(e_ij)e_ki]+[e_ijL(e_ki)]}
\end{align}
Comparing \cref{-L(e_kj)=[L(e_ij)e_ki]+[e_ijL(e_ki)]} with the form of $-L(e_{kj})$ from \cref{L(e_ij)-final}, we obtain $C_{jk}^{jj}+C_{jk}^{kk}=0$ for $j<k$, and $C_{ki}^{ki}+C_{ij}^{ij}=C_{kj}^{kj}$ for $k<i<j$, $k\ne j$, the latter being \cref{C_ij^ij+C_jk^jk=C_ik^ik}.

{\it Case 2.3.} The case, when $i\neq l$ and $j=k$, is symmetric to {\it Case 2.2}.

{\it Case 2.4.} When $i=l$ and $j=k$, we have by \cref{L(e_ij)-final}
\begin{align}
L(e_{ii})-L(e_{jj})&=[L(e_{ij}),e_{ji}]+[e_{ij},L(e_{ji})]\notag\\
&=\sum_{x<i}C_{xi}^{ii}e_{xi}+C_{ij}^{ij}e_{ii}-C_{ij}^{ij}e_{jj}-\sum_{y>j}C_{jy}^{jj}e_{jy}\notag\\
&\quad +C_{ji}^{ji}e_{ii}+\sum_{y>i}C_{iy}^{ii}e_{iy}-\sum_{x<j}C_{xj}^{jj}e_{xj}-C_{ji}^{ji}e_{jj}.\label{L(e_ii)-L(e_jj)=[L(e_ij)e_ji]+[e_ijL(e_ji)]}
\end{align}
On the other hand, taking \cref{L(e_ii)-final} into account, we get
\begin{align}
L(e_{ii})-L(e_{jj})&=\sum_{x<i}C_{xi}^{ii}e_{xi}+\sum_{x\in X}C_{xx}^{ii}e_{xx}+\sum_{y>i}C_{iy}^{ii}e_{iy}\notag\\
&\quad -\sum_{x<j}C_{xj}^{jj}e_{xj}-\sum_{y\in X}C_{yy}^{jj}e_{yy}-\sum_{y>j}C_{jy}^{jj}e_{jy}.\label{L(e_ii)-L(e_jj)-other}
\end{align}
Comparing the right-hand sides of \cref{L(e_ii)-L(e_jj)=[L(e_ij)e_ji]+[e_ijL(e_ji)],L(e_ii)-L(e_jj)-other}, one deduces that
\begin{align}
(C_{ij}^{ij}+C_{ji}^{ji})e_{ii}-(C_{ij}^{ij}+C_{ji}^{ji})e_{jj}
&=(C_{ii}^{ii}-C_{ii}^{jj})e_{ii}+(C_{jj}^{ii}-C_{jj}^{jj})e_{jj}\notag\\
&\quad +\sum_{x\neq i,j}C_{xx}^{ii}e_{xx}-\sum_{y\neq i,j}C_{yy}^{jj}e_{yy}.\label{L(e_ii)-L(e_jj)-combined}
\end{align}
Since on the left-hand side of \cref{L(e_ii)-L(e_jj)-combined} the coefficients of $e_{ii}$ and $e_{jj}$ differ only by the sign, it follows that on the right-hand side $C_{ii}^{ii}-C_{ii}^{jj}=C_{jj}^{jj}-C_{jj}^{ii}$ for $i<j<i$. Substituting $C_{ii}^{jj}=C_{jj}^{jj}$ and $C_{jj}^{ii}=C_{ii}^{ii}$ for $i<j<i$ by \cref{C_kk^ii=C_ii^ii-for-k<i,C_ii^ii=C_ll^ii-for-i<l}, we get $C_{ii}^{ii}-C_{jj}^{jj}=C_{jj}^{jj}-C_{ii}^{ii}$. Since $\cR$ is $2$-torsion free,
\begin{align}\label{C_ii^ii=C_jj^jj}
 C_{ii}^{ii}=C_{jj}^{jj},
\end{align}
so the coefficients of $e_{ii}$ and $e_{jj}$ in \cref{L(e_ii)-L(e_jj)-combined} are zero, yielding thus \cref{C_ij^ij+C_ji^ji=0}. Finally observe that $C_{xx}^{ii}=C_{xx}^{jj}$ in \cref{L(e_ii)-L(e_jj)-combined} can be deduced from \cref{C_ii^ii=C_xx^ii,C_ii^ii=C_jj^jj}.
\end{proof}

We are now ready to prove the main result of this section.

\begin{proof}[Proof of \cref{L-proper-fin-case}]
If $|X|=1$, then $I(X,\cR)\cong \cR$, and \cref{L-proper-fin-case} is obvious. If $|X|\geq 2$, then let $L:I(X,\cR) \to I(X,\cR)$ be a Lie derivation. By \cref{L(e_ij)-gen-form,coefficients-C^ij_xy} the map $L$ has the form \cref{L(e_ii)-final,L(e_ij)-final}, where the coefficients $C^{ij}_{xy}$ satisfy \cref{C_ij^ii+C_ij^jj=0,C_ij^ij+C_jk^jk=C_ik^ik,C_ij^ij+C_ji^ji=0,C_ii^ii=C_xx^ii}.

We define an $\cR$-linear operator $D$ by
\begin{align*}
 D(e_{ii})&=\sum_{x<i}C_{xi}^{ii}e_{xi}+\sum_{y>i}C_{iy}^{ii}e_{iy},\\
 D(e_{ij})&=\sum_{x<i}C_{xi}^{ii}e_{xj}+C_{ij}^{ij}e_{ij}+\sum_{y>j}C_{jy}^{jj}e_{iy},\mbox{ if }i<j.
\end{align*}
Observe that $D(e_{ij})=\sum_{x\le y}\tilde C_{xy}^{ij}e_{xy}$, where $\tilde C_{xy}^{ij}=C_{xy}^{ij}$ for $x<y$, and $\tilde C_{xx}^{ij}=0$. Then by \cref{coefficients-C^ij_xy}
\begin{align*}
\tilde C_{ij}^{ii}+\tilde C_{ij}^{jj}&=0,\mbox{ if }i<j,\\
\tilde C^{ij}_{ij}+\tilde C^{jk}_{jk}&=\tilde C^{ik}_{ik},\mbox{ if }i\le j\le k,
\end{align*}
so \cite[Theorem 2.2]{Xiao1} makes $D$ be a derivation.

Then the linear map $F:=L-D$ satisfies
 $$
 F(e_{ij})=0,\mbox{ if }i<j,\mbox{ and }F(e_{ii})=C_{ii}^{ii}\delta,
 $$
therefore \cite[Corollary 1.3.15]{SpDo} makes $F$ be a centralizing mapping of $I(X,\cR)$.
\end{proof}

\section{The general case}\label{gen-case}

Let $(X,\le)$ be a locally finite preordered set. Denote by $\tilde{I}(X,\cR)$ the $\cR$-subspace of $I(X,\cR)$
generated by the elements $e_{xy}$ with $x\le y$, where $e_{xy}$ is defined by \cref{e_xy}. It turns out that $\tilde{I}(X,\cR)$ is a subalgebra of $I(X,\cR)$, because $e_{xy}e_{uv}=\delta_{yu}e_{xv}\in \tilde{I}(X,\cR)$. Note that $\tilde{I}(X,\cR)$ consists exactly of the functions $f\in I(X,\cR)$ which are nonzero only at a finite number of $(x,y)$. Clearly, $\tilde{I}(X,\cR)=I(X,\cR)$ if and only if $X$ is finite.

By a {\it derivation} (respectively, {\it Lie derivation}) of $\tilde{I}(X,\cR)$, we shall mean an $\cR$-map $L: \tilde{I}(X,\cR)\to I(X,\cR)$ for which $L(fg)=L(f)g+fL(g)$ (respectively, $L([f,g])=[L(f),g]+[f,L(g)]$) holds, when $f,g\in \tilde{I}(X,\cR)$. Observe that \cref{L(e_ij)-gen-form,coefficients-C^ij_xy} remain valid, when we replace a finite $X$ by a locally finite $X$, $I(X,\cR)$ by $\tilde{I}(X,\cR)$ and the ``usual'' notions of a derivation (Lie derivation) by the ones we have just introduced. Indeed, although the sums $L(e_{ij})=\sum_{x\le y}C_{xy}^{ij}e_{xy}$ are now infinite, multiplication by $e_{uv}$ on the left or on the right works as in the finite case.

We shall need the following easy formula:
\begin{align}\label{e_xfe_y}
 e_{xx}fe_{yy}=f(x,y)e_{xy}
\end{align}
for all $f\in I(X,\cR)$ and $x\le y$.

\begin{defn}\label{f_[xy]-defn}
For any $f\in I(X,\cR)$ and $x\le y$ define {\it the restriction of $f$} to $\{z\in X\mid x\le z\le y\}$ to be
\begin{align}\label{f_[xy]}
f|_x^y=\sum _{x\le u\le v\le y}f(u,v)e_{uv}.
\end{align}
Observe that the sum above is finite, so $f|_x^y\in \tilde{I}(X,\cR)$.
\end{defn}

The next lemma is straightforward.
\begin{lem}\label{prop-f_[xy]}
Let $f\in I(X,\cR)$. Then
\begin{enumerate}
 \item $x\le u\le v\le y \Rightarrow(f|_x^y)|_u^v=f|_u^v$;\label{(f_[xy])_[uv]}
 \item $(fg)(x,y)=(f|_x^yg)(x,y)=(fg|_{x}^{y})(x,y)=(f|_x^yg|_{x}^{y})(x,y)$.\label{(fg)(xy)=(f_[xy]g)(xy)}
\end{enumerate}
\end{lem}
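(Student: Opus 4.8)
The plan is to prove both parts by directly expanding the defining formula \cref{f_[xy]} of the restriction together with the convolution formula, using nothing beyond reflexivity and transitivity of $\le$. Neither part carries any genuinely new idea; the whole content is in verifying that a few comparabilities hold so that the restricted functions agree with the original ones on the relevant index sets.

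For part \cref{(f_[xy])_[uv]}, I would first expand the outer restriction as $(f|_x^y)|_u^v=\sum_{u\le s\le t\le v}(f|_x^y)(s,t)\,e_{st}$. The single point to check is that each coefficient $(f|_x^y)(s,t)$ occurring here already coincides with $f(s,t)$. This is exactly where the hypothesis $x\le u\le v\le y$ is used: for any pair with $u\le s\le t\le v$, transitivity gives $x\le s$ (from $x\le u\le s$) and $t\le y$ (from $t\le v\le y$), so $(s,t)$ lies in the index set defining $f|_x^y$, whence $(f|_x^y)(s,t)=f(s,t)$. Substituting this back gives precisely $\sum_{u\le s\le t\le v}f(s,t)\,e_{st}=f|_u^v$.

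For part \cref{(fg)(xy)=(f_[xy]g)(xy)}, I would write out each of the three right-hand sides via $(pq)(x,y)=\sum_{x\le z\le y}p(x,z)q(z,y)$ and compare term by term with $(fg)(x,y)=\sum_{x\le z\le y}f(x,z)g(z,y)$. For the first equality, the coefficient $(f|_x^y)(x,z)$ appearing in $(f|_x^yg)(x,y)$ equals $f(x,z)$, because $x\le x\le z\le y$ places $(x,z)$ in the restriction range; here it is reflexivity $x\le x$ that is needed. The second equality is symmetric, using $(g|_x^y)(z,y)=g(z,y)$ since $x\le z\le y\le y$, and the third combines both observations simultaneously. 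In each case the sum runs over the same index set $\{z:x\le z\le y\}$ with identical summands, so all three expressions equal $(fg)(x,y)$.

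The main, and really the only, obstacle is purely notational: one must keep track of the three comparabilities built into each restriction and confirm, via reflexivity and transitivity, that the relevant evaluation of the restricted function agrees with that of $f$ (or $g$) on the index set actually summed over. Once these index-set containments are recorded, both identities collapse to rewriting a single sum, and nothing further is required.
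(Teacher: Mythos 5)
Your proof is correct and is exactly the direct verification the paper has in mind when it declares this lemma ``straightforward'' without proof: expand \cref{f_[xy]} and the convolution, and use transitivity (for part \cref{(f_[xy])_[uv]}) and reflexivity (for part \cref{(fg)(xy)=(f_[xy]g)(xy)}) to see that the restricted functions agree with $f$ and $g$ on every index actually appearing in the sums. Nothing is missing.
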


\begin{lem}\label{f-mapsto-f_[xy]-homo}
The map $f\mapsto f|_x^y$ is a homomorphism $I(X,\cR)\to\tilde{I}(X,\cR)$.
\end{lem}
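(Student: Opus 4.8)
The plan is to verify directly the two defining properties of an algebra homomorphism for the map $\varphi\colon f\mapsto f|_x^y$. Its $\cR$-linearity is immediate from the defining formula \cref{f_[xy]}, since each coefficient $f(u,v)$ depends $\cR$-linearly on $f$. The whole content of the lemma is thus the multiplicativity identity $(fg)|_x^y=f|_x^y\,g|_x^y$, which I would prove by comparing the two sides coordinate by coordinate, using that an element of $\tilde{I}(X,\cR)$ is completely determined by its values as a function on $X\times X$.

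First I would pin down the support of the right-hand side. Expanding $f|_x^y$ and $g|_x^y$ in the standard basis and multiplying term by term via $e_{ab}e_{cd}=\delta_{bc}e_{ad}$, every nonzero contribution arises from a pair of basis elements with $x\le a\le b\le y$ and $x\le b\le d\le y$, whence $x\le a\le d\le y$. Therefore $f|_x^y\,g|_x^y$ is supported on the ``box'' $\{(u,v)\mid x\le u\le v\le y\}$, which is exactly where $(fg)|_x^y$ is supported by definition \cref{f_[xy]}. Consequently both functions vanish at every pair $(u,v)$ lying outside the box, and it only remains to match their values inside it.

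For a pair $x\le u\le v\le y$ I would chain together the two parts of \cref{prop-f_[xy]}. Applying part \cref{(fg)(xy)=(f_[xy]g)(xy)} at $(u,v)$ to the product of $f|_x^y$ and $g|_x^y$ gives
$$
(f|_x^y\,g|_x^y)(u,v)=\bigl((f|_x^y)|_u^v\,(g|_x^y)|_u^v\bigr)(u,v);
$$
by part \cref{(f_[xy])_[uv]} the nested restrictions collapse, $(f|_x^y)|_u^v=f|_u^v$ and likewise for $g$; and a final use of part \cref{(fg)(xy)=(f_[xy]g)(xy)}, now at $(u,v)$ for the original $f,g$, yields $(f|_u^v\,g|_u^v)(u,v)=(fg)(u,v)=\bigl((fg)|_x^y\bigr)(u,v)$. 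Thus the two functions agree at every $(u,v)$, establishing multiplicativity and hence that $\varphi$ is a homomorphism.

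I do not expect a genuine obstacle here, in keeping with the paper's remark that the lemma is straightforward; the only points requiring care are the exhaustive bookkeeping of supports, so that the coordinatewise comparison covers all pairs, and the observation that every intermediate restriction lands in $\tilde{I}(X,\cR)$ by local finiteness, so that \cref{prop-f_[xy]} legitimately applies. As an alternative that sidesteps \cref{prop-f_[xy]} altogether, one can compute directly
$$
(f|_x^y\,g|_x^y)(u,v)=\sum_{u\le w\le v}(f|_x^y)(u,w)\,(g|_x^y)(w,v),
$$
and observe that for $x\le u\le v\le y$ every intermediate index $w$ satisfies $x\le u\le w\le v\le y$, so each factor reduces to $f(u,w)$ and $g(w,v)$ and the sum is precisely the convolution value $(fg)(u,v)$.
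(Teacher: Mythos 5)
Your proposal is correct and follows essentially the same route as the paper: both proofs verify the identity $(fg)|_x^y=f|_x^y g|_x^y$ coordinatewise, chaining parts \cref{(f_[xy])_[uv]} and \cref{(fg)(xy)=(f_[xy]g)(xy)} of \cref{prop-f_[xy]} for pairs inside the box $x\le u\le v\le y$ (your chain of equalities is the paper's read in reverse), and disposing of pairs outside the box by a support argument. The only cosmetic difference is that the paper handles the outside case directly from the vanishing of $f|_x^y(u,\cdot)$ or $g|_x^y(\cdot,v)$ rather than by basis-element bookkeeping, and your concluding direct-convolution computation is a fine, even more elementary, substitute for the in-box step.
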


\begin{proof}
The linearity is trivial. We prove that $(fg)|_x^y=f|_x^yg|_x^y$. Suppose first that $x\le u\le v\le y$. Then, using \cref{prop-f_[xy]}, we have
\begin{align*}
(fg)|_{x}^{y}(u,v)&=(fg)(u,v)=(f|_u^vg|_u^v)(u,v)=((f|_x^y)|_u^v(g|_{x}^{y})|_u^v)(u,v)\\
&=(f|_x^yg|_{x}^{y})(u,v).
\end{align*}
Otherwise we have $x\not\le u$ or $v\not\le y$. In the first case $f|_x^y(u,w)=0$ for all $w$ by \cref{f_[xy]}. Similarly $g|_{x}^{y}(w,v)=0$ for all $w$ in the second case. According to the definition of incidence algebra in \cref{intro-prelim}, we see that in both situations $(f|_x^yg|_{x}^{y})(u,v)=0=(fg)|_{x}^{y}(u,v)$.
\end{proof}

\begin{lem}\label{Lf(xy)=Lf-restricted}
Let $L$ be a Lie derivation of $I(X,\cR)$ and $x<y$. Then
\begin{align}\label{Lf(xy)=Lf_[xy](xy)}
 L(f)(x,y)=L(f|_x^y)(x,y).
\end{align}
Moreover, if $L$ is a derivation, then \cref{Lf(xy)=Lf_[xy](xy)} holds for $x=y$ too.
\end{lem}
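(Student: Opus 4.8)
The plan is to reduce \cref{Lf(xy)=Lf_[xy](xy)} to a vanishing statement for the ``tail'' $g:=f-f|_x^y$ and then to extract the $(x,y)$-entry of $L(g)$ purely through the idempotents $e_{xx}$ and $e_{yy}$, using \cref{e_xfe_y} in the form $e_{aa}he_{bb}=h(a,b)e_{ab}$. Since $(f|_x^y)|_x^y=f|_x^y$ by \cref{prop-f_[xy]}\cref{(f_[xy])_[uv]} (take $u=x$, $v=y$), we have $g|_x^y=0$, hence $g(u,v)=0$ for all $x\le u\le v\le y$. By $\cR$-linearity of $L$ and of both sides of \cref{Lf(xy)=Lf_[xy](xy)}, it then suffices to prove $L(g)(x,y)=0$ for every such $g$.

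First I treat $x<y$, where $e_{xx}e_{yy}=e_{yy}e_{xx}=0$. A direct computation gives, for any $h\in I(X,\cR)$, the extraction identity $e_{xx}[[e_{xx},h],e_{yy}]e_{yy}=e_{xx}he_{yy}=h(x,y)e_{xy}$, so with $h=L(g)$ one has $L(g)(x,y)e_{xy}=e_{xx}[[e_{xx},L(g)],e_{yy}]e_{yy}$. I expand the inner double commutator by applying the Lie derivation identity twice --- first to $[e_{xx},L(g)]=L([e_{xx},g])-[L(e_{xx}),g]$, then to $[L([e_{xx},g]),e_{yy}]$ --- to obtain
\[
[[e_{xx},L(g)],e_{yy}]=L([[e_{xx},g],e_{yy}])-[[e_{xx},g],L(e_{yy})]-[[L(e_{xx}),g],e_{yy}].
\]
Now $[[e_{xx},g],e_{yy}]=g(x,y)e_{xy}+g(y,x)e_{yx}=0$, since $g(x,y)$ is an interval entry and $g(y,x)$ can be nonzero only when $y\le x$, in which case $(y,x)$ is again an interval entry; so the first term dies after applying $L$. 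For the remaining two terms I sandwich by $e_{xx}(\,\cdot\,)e_{yy}$ and use $e_{xx}e_{yy}=0$ to kill the cross-products. What survives is a sum of scalars of the shape $L(e_{xx})(x,v)\,g(v,y)$, $g(x,u)\,L(e_{xx})(u,y)$ and $g(x,u)\,L(e_{yy})(u,y)$ times $e_{xy}$; in each summand the comparability constraints that force both factors to be nonzero pin the index into $x\le u\le y$ (resp.\ $x\le v\le y$), so the relevant entry of $g$ is an interval entry and hence vanishes. Therefore $L(g)(x,y)=0$.

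For the diagonal case $x=y$ the above extraction is useless, because $e_{xx}[[e_{xx},h],e_{xx}]e_{xx}=0$ identically; this is precisely where the genuine Leibniz rule is required. Assuming $L$ is a derivation, I use that $e_{xx}ge_{xx}=g(x,x)e_{xx}=0$ and expand $0=L(e_{xx}ge_{xx})=L(e_{xx})ge_{xx}+e_{xx}L(g)e_{xx}+e_{xx}gL(e_{xx})$. Multiplying on both sides by $e_{xx}$ and solving yields $L(g)(x,x)e_{xx}=e_{xx}L(g)e_{xx}=-e_{xx}L(e_{xx})ge_{xx}-e_{xx}gL(e_{xx})e_{xx}$, and the same row/column localization (now with the constraints $x\le v\le x$ and $x\le u\le x$) forces both terms to zero because $g|_x^x=0$.

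The main obstacle is that a general $f$ need not lie in $\tilde{I}(X,\cR)$, so $L(f)$ cannot be written as a finite combination of the $L(e_{uv})$, and the explicit forms \cref{L(e_ii)-final,L(e_ij)-final} are unavailable for $f$ itself. The entire argument is designed to circumvent this: every step multiplies an $L$-value by the fixed idempotents $e_{xx}$ and $e_{yy}$, which extracts single rows and columns and confines all bookkeeping to the finite interval $\{z\mid x\le z\le y\}$, where $g$ vanishes by construction.
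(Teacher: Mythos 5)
Your proof is correct, and while it runs on the same engine as the paper's, it is organized along a genuinely different route, so a comparison is worthwhile. Both arguments exploit the same mechanism: apply the Lie derivation rule twice against the idempotents $e_{xx}$, $e_{yy}$ and sandwich with them, so that by \cref{e_xfe_y} every surviving term is a convolution sum over the finite interval $\{z\mid x\le z\le y\}$ and therefore sees only interval entries. The difference is what gets computed. The paper keeps $f$ itself and derives an explicit local formula \cref{L(f)(xy)} for $L(f)(x,y)$, involving $L(e_{xx})$, $L(e_{yy})$, $L(e_{xy})$, $L(e_{yx})$ and only interval data of $f$; it then observes that the same formula applied to $f|_x^y$ produces the same value. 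You instead subtract first, setting $g=f-f|_x^y$, so that $g|_x^y=0$ by \cref{(f_[xy])_[uv]} of \cref{prop-f_[xy]}, and reduce to the vanishing statement $L(g)(x,y)=0$, which you establish through the extraction identity $e_{xx}[[e_{xx},h],e_{yy}]e_{yy}=h(x,y)e_{xy}$ and the observation that $[[e_{xx},g],e_{yy}]=g(x,y)e_{xy}+g(y,x)e_{yx}=0$. What this buys you is freedom from formula bookkeeping: you never need to know what $L(f)(x,y)$ actually equals, only that every surviving scalar contains an interval entry of $g$; what the paper's version buys is the explicit expression \cref{L(f)(xy)}, which documents exactly which values of $L$ on the standard basis enter. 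You handle the preorder subtlety (that $y\le x$ with $x\ne y$ is possible, making $(y,x)$ an interval entry) in the same way the paper must, and your diagonal case is the paper's argument verbatim, with Leibniz applied to $e_{xx}ge_{xx}$ instead of $e_{xx}fe_{xx}$. Your side remark that $e_{xx}[[e_{xx},h],e_{xx}]e_{xx}=0$ identically is a nice structural addition absent from the paper: it explains why the Lie hypothesis alone cannot reach diagonal entries, i.e.\ why the derivation hypothesis is genuinely needed for $x=y$.
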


\begin{proof}
In view \cref{e_xfe_y}
\begin{align}
L(f)(x,y)&=[e_{xx},L(f)](x,y)\notag\\
&=\left([f,L(e_{xx})]+L([e_{xx},f])\right)(x,y)\notag\\
&=\left(fL(e_{xx})\right)(x,y)-\left(L(e_{xx})f\right)(x,y)\notag\\
&\quad +L(e_{xx}f)(x,y)-L(fe_{xx})(x,y).\label{L(f)_xy}
\end{align}
Writing $L\left([f,g]\right)=[L(f),g]+[f,L(g)]$ for the pair $e_{xx}f,e_{yy}\in I(X,\cR)$ and taking into account that $e_{xx}e_{yy}=0$, we get
\begin{align}
L(e_{xx}f)(x,y)&=\left(L(e_{xx}f)e_{yy}\right)(x,y)\notag\\
&=L\left([e_{xx}f,e_{yy}]\right)(x,y)+\left(e_{yy}L(e_{xx}f)\right)(x,y)\notag\\
&\quad -\left(e_{xx}fL(e_{yy})\right)(x,y)+\left(L(e_{yy})e_{xx}f\right)(x,y)\notag\\
&=L(e_{xx}fe_{yy})(x,y)-\left(fL(e_{yy})\right)(x,y)+L(e_{yy})(x,x)f(x,y)\notag\\
&=f(x,y)L(e_{xy})(x,y)-\left(fL(e_{yy})\right)(x,y)+L(e_{yy})(x,x)f(x,y).\label{L(e_xxf)_xy}
\end{align}
Analogously, for $e_{yy},fe_{xx}\in I(X,\cR)$, we obtain
\begin{align}
L(fe_{xx})(x,y)&=\left(L(fe_{xx})e_{yy}\right)(x,y)\notag\\
&=-L\left([e_{yy},fe_{xx}]\right)(x,y)+\left(L(e_{yy})fe_{xx}\right)(x,y)\notag\\
&\quad -\left(fe_{xx}L(e_{yy})\right)(x,y)+\left(e_{yy}L(fe_{xx})\right)(x,y)\notag\\
&=-L(e_{yy}fe_{xx})(x,y)-f(x,x)L(e_{yy})(x,y)\notag\\
&=-f(y,x)L(e_{yx})(x,y)-f(x,x)L(e_{yy})(x,y).\label{L(fe_xx)_xy}
\end{align}
It follows from \cref{L(f)_xy,L(e_xxf)_xy,L(fe_xx)_xy} that
\begin{align}
L(f)(x,y)&=\left(fL(e_{xx})\right)(x,y)-\left(L(e_{xx})f\right)(x,y)\notag\\
&\quad +f(x,y)L(e_{xy})(x,y)-\left(fL(e_{yy})\right)(x,y)+L(e_{yy})(x,x)f(x,y)\notag\\
&\quad f(x,x)L(e_{yy})(x,y)+f(y,x)L(e_{yx})(x,y).\label{L(f)(xy)}
\end{align}
If $y\not\le x$, then the last summand of \cref{L(f)(xy)} equals $0$. If $y\le x$, then $x\le z\le y\iff y\le z\le x$, so $f(y,x)=f|_{y}^{x}(y,x)=f|_x^y(y,x)$. Obviously, $f(x,y)=f|_x^y(x,y)$ and $f(x,x)=f|_x^y(x,x)$. Using \cref{(fg)(xy)=(f_[xy]g)(xy)} of \cref{prop-f_[xy]} we may replace $f$ by $f|_x^y$ in the first, second and fourth summand of the right-hand side of \cref{L(f)(xy)}. Since the same formula \cref{L(f)(xy)} is true for $f|_x^y$, we get \cref{Lf(xy)=Lf_[xy](xy)}.

Suppose now that $L$ is a derivation. Then
$$
L(e_{xx}fe_{xx})=L(e_{xx})fe_{xx}+e_{xx}L(f)e_{xx}+e_{xx}fL(e_{xx})
$$
and thus
\begin{align*}
L(f)(x,x)&=f(x,x)L(e_{xx})(x,x)-\left(L(e_{xx})f\right)(x,x)-\left(fL(e_{xx})\right)(x,x)\\
&=f|_{x}^{x}(x,x)L(e_{xx})(x,x)-\left(L(e_{xx})f|_{x}^{x}\right)(x,x)-\left(f|_{x}^{x}L(e_{xx})\right)(x,x)
\end{align*}
which coincides with $L(f|_{x}^{x})(x,x)$.
\end{proof}

\begin{rem}\label{der-is-det-by-restr}
Each derivation of $I(X,\cR)$ is fully determined by its values on the elements of $\tilde{I}(X,\cR)$, and \cref{Lf(xy)=Lf-restricted} remains valid, when we replace $I(X,\cR)$ by $\tilde{I}(X,\cR)$.
\end{rem}

\begin{lem}\label{ext-L-to-I(XR)}
Let $L:\tilde I(X,R)\to I(X,R)$ be a Lie derivation. We define
$$
\hat{L}(f)(x,y):=L(f|_x^y)(x,y)
$$
for all $f\in I(X,\cR)$, $x\le y$.
Then $\hat{L}$ is a Lie derivation of $I(X,\cR)$.
\end{lem}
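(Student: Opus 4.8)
The plan is to verify the Lie-derivation identity for $\hat L$ one entry at a time. First I would record that $\hat L$ is well defined and $\cR$-linear: each $f|_x^y$ lies in $\tilde I(X,\cR)$ by \cref{f_[xy]-defn}, so $L(f|_x^y)(x,y)$ is a well-defined scalar, and declaring $\hat L(f)(x,y)=0$ for $x\not\le y$ places $\hat L(f)$ in $I(X,\cR)$; linearity is inherited from that of $f\mapsto f|_x^y$ and of $L$. The task then reduces to proving
\[
\hat L([f,g])(x,y)=[\hat L(f),g](x,y)+[f,\hat L(g)](x,y)
\]
for all $f,g\in I(X,\cR)$ and all $x\le y$.

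The engine of the argument is localization to the interval $\{z\mid x\le z\le y\}$. Fixing $x\le y$ and writing $F:=f|_x^y$ and $G:=g|_x^y$, I would treat the left-hand side using \cref{f-mapsto-f_[xy]-homo}: since restriction is an algebra homomorphism, $[f,g]|_x^y=[F,G]$, whence $\hat L([f,g])(x,y)=L([F,G])(x,y)$, and the Lie-derivation property of $L$ on $\tilde I(X,\cR)$ turns this into $[L(F),G](x,y)+[F,L(G)](x,y)$. For the right-hand side I would invoke \cref{(fg)(xy)=(f_[xy]g)(xy)} of \cref{prop-f_[xy]} to replace each factor of a product by its restriction when evaluating at $(x,y)$, so that $[\hat L(f),g](x,y)=[\hat L(f)|_x^y,G](x,y)$ and likewise for the other commutator. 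Everything is thereby reduced to comparing $\hat L(f)|_x^y$ with $L(F)$, and $\hat L(g)|_x^y$ with $L(G)$, inside commutators evaluated at $(x,y)$.

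Off the diagonal this comparison is immediate: for $x\le u<v\le y$ one has, by \cref{(f_[xy])_[uv]} of \cref{prop-f_[xy]} and the $\tilde I(X,\cR)$-version of \cref{Lf(xy)=Lf-restricted} (\cref{der-is-det-by-restr}), which applies precisely because $u<v$,
\[
\hat L(f)(u,v)=L(f|_u^v)(u,v)=L(F|_u^v)(u,v)=L(F)(u,v).
\]
The delicate point, and the step I expect to be the main obstacle, is the diagonal: there $\hat L(f)(u,u)=L(f|_u^u)(u,u)$ is computed from the restriction of $f$ to the clique of $u$, whereas the matching entry of $L(F)$ sees the whole interval, and the $x=y$ clause of \cref{Lf(xy)=Lf-restricted} is unavailable for a bare Lie derivation. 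Thus $\hat L(f)|_x^y-L(F)$ is a (possibly nonzero) element supported on the diagonal of the interval, and the heart of the proof is to check that such diagonal elements contribute nothing to the two commutators at the entry $(x,y)$. I would isolate this diagonal difference, evaluate its commutators with $G$ and with $F$ at $(x,y)$ directly, and engineer the cancellation using the coefficient relation \cref{C_ii^ii=C_xx^ii} (that all diagonal coefficients of $L(e_{ii})$ coincide), together with \cref{C_ij^ii+C_ij^jj=0,C_ij^ij+C_jk^jk=C_ik^ik,C_ij^ij+C_ji^ji=0} as needed. The case $x=y$ is comparatively harmless, since then $u$ ranges over a single clique and the identity $\hat L(f)(u,u)=L(f|_x^y)(u,u)$ already holds; it is only for $x<y$, with $x$ and $y$ in different cliques, that these diagonal terms must be reckoned with, and handling them cleanly is where I anticipate the real work to lie.
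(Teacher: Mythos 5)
Your reduction is the same as the paper's --- localize through the restriction homomorphism of \cref{f-mapsto-f_[xy]-homo}, then compare $\hat L(f)$ with $L(f|_x^y)$ entry by entry --- but your diagnosis of the diagonal is sharper than the paper's own proof. The paper's computation
\[
\bigl(L(f|_x^y)g\bigr)(x,y)=\sum_{x\le z\le y}L\bigl((f|_x^y)|_{x}^{z}\bigr)(x,z)\,g(z,y)
\]
invokes \cref{der-is-det-by-restr} at the term $z=x$ (and at $z=y$ for the product $gL(f|_x^y)$), where the restriction identity of \cref{Lf(xy)=Lf-restricted} is not available for a Lie derivation; the same defect lies behind the paper's assertion that $\hat L(f)=L(f)$ for $f\in\tilde I(X,\cR)$. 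You put your finger exactly on this weak point.

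However, the step you defer --- proving that the diagonal difference contributes nothing to the two commutators by ``engineering the cancellation'' from \cref{C_ii^ii=C_xx^ii} and its companions --- is a genuine gap, and it cannot be filled, because the lemma as stated is false. Take $X=\{1,2\}$ with $1<2$ and $L(f)=f(2,2)\delta$. This is a Lie derivation (every commutator has zero $(2,2)$-entry and $\delta$ is central) whose coefficients satisfy all relations of \cref{coefficients-C^ij_xy}; yet $f|_1^1=f(1,1)e_{11}$ forgets the $(2,2)$-entry of $f$, so $\hat L(f)=f(2,2)e_{22}$, and
\[
\hat L\bigl([e_{12},e_{22}]\bigr)=\hat L(e_{12})=0,\qquad
[\hat L(e_{12}),e_{22}]+[e_{12},\hat L(e_{22})]=[e_{12},e_{22}]=e_{12}\ne 0.
\]
In your notation (with $x=1$, $y=2$, $g=e_{22}$, $F=e_{12}$) the diagonal difference is $\hat L(g)|_1^2-L(g|_1^2)=e_{22}-\delta=-e_{11}$, and $[F,-e_{11}]=e_{12}$: this is precisely the contribution your plan must kill, and it survives; no combination of \cref{C_ij^ii+C_ij^jj=0,C_ij^ij+C_jk^jk=C_ik^ik,C_ij^ij+C_ji^ji=0,C_ii^ii=C_xx^ii} can rule it out, since this example satisfies them all. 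What is true --- and is all that the proof of \cref{L=D+R} actually uses --- is the derivation case, \cref{ext-D-to-I(XR)}: for a derivation the $x=y$ clause of \cref{Lf(xy)=Lf-restricted} becomes available, every step of your argument (and of the paper's) then closes, and $\hat L$ is a derivation. For a genuine Lie derivation the extension can only be attempted after first subtracting the diagonal part $L(f)_d$, which is exactly how the proof of \cref{L=D+R} proceeds.
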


\begin{proof}
Linearity of $\hat{L}$ is explained by linearity of $L$ and \cref{f-mapsto-f_[xy]-homo}. By \cref{der-is-det-by-restr} one has $\hat{L}(f)=L(f)$ for $f\in\tilde{I}(X,\cR)$. So, $\hat{L}$ is a linear extension of $L$ to the whole $I(X,\cR)$.
Given $f,g\in I(X,\cR)$ and $x\le y$, by \cref{f-mapsto-f_[xy]-homo}, \cref{(fg)(xy)=(f_[xy]g)(xy)} of \cref{prop-f_[xy]} and the fact that $L$ is a Lie derivation of $\tilde{I}(X,\cR)$
\begin{align*}
\hat{L}\left([f,g]\right)(x,y)&=L\left([f,g]|_{x}^{y}\right)(x,y)=L\left([f|_x^y,g|_{x}^{y}]\right)(x,y)\\
   &=\left([L(f|_x^y),g|_{x}^{y}]+[f|_x^y,L(g|_{x}^{y})]\right)(x,y)\\
   &=\left([L(f|_x^y),g]+[f,L(g|_{x}^{y})]\right)(x,y).
\end{align*}
Observe by \cref{der-is-det-by-restr} and \cref{(f_[xy])_[uv]} of \cref{prop-f_[xy]} that
\begin{align*}
\left(L(f|_x^y)g\right)(x,y)&=\sum_{x\le z\le y}L(f|_x^y)(x,z)g(z,y)\\
&=\sum_{x\le z\le y}L\left((f|_x^y)|_{x}^{z}\right)(x,z)g(z,y)\\
&=\sum_{x\le z\le y}L(f|_{x}^{z})(x,z)g(z,y)\\
&=\sum_{x\le z\le y}\hat{L}(f)(x,z)g(z,y)=(\hat{L}(f)g)(x,y).
\end{align*}
Similarly $\left(gL(f|_x^y)\right)(x,y)=(g\hat{L}(f))(x,y)$, so $[L(f|_x^y),g](x,y)=[\hat{L}(f),g](x,y)$. By the same argument $[f,L(g|_{x}^{y})](x,y)=[f,\hat{L}(g)](x,y)$. Thus, $\hat{L}([f,g])=[\hat{L}(f),g]+[f,\hat{L}(g)]$,
i.\,e. $\hat{L}$ is a Lie derivation of $I(X,\cR)$.
\end{proof}

\begin{rem}\label{ext-D-to-I(XR)}
Under the conditions of \cref{ext-L-to-I(XR)} if $L$ is a derivation, then $\hat{L}$ is a derivation.
\end{rem}

\begin{lem}\label{L(f)_xx=L(f)_yy}
Let $X$ be connected and $L$ be a Lie derivation of $I(X,R)$. Then $L(f)(x,x)=L(f)(y,y)$ for all $x,y\in X$.
\end{lem}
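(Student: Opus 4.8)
The plan is to reduce the whole statement to a single off-diagonal entry and then to a finite computation. First, since $X$ is connected, it suffices to prove $L(f)(x,x)=L(f)(y,y)$ when $x$ and $y$ are comparable, say $x\le y$ with $x\ne y$, and afterwards to chain these equalities along a path joining any two prescribed points (interchanging $x$ and $y$ when instead $y\le x$). For such a pair, a direct computation from \cref{e_xfe_y} gives the key identity
\[
[e_{xy},L(f)](x,y)=L(f)(y,y)-L(f)(x,x),
\]
which holds with no antisymmetry assumption. Thus the statement amounts to showing that the $(x,y)$-entry of $[e_{xy},L(f)]$ vanishes.

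Next I would rewrite the Lie derivation identity as $[e_{xy},L(f)]=L([e_{xy},f])-[L(e_{xy}),f]$ and evaluate at $(x,y)$, so that it remains to prove
\[
L([e_{xy},f])(x,y)=[L(e_{xy}),f](x,y).
\]
The right-hand side is obtained at once from the explicit shape of $L(e_{xy})$ furnished by \cref{L(e_ij)-final}. For the left-hand side the crucial point is that $x<y$, so \cref{Lf(xy)=Lf-restricted} applies and gives $L([e_{xy},f])(x,y)=L\bigl(([e_{xy},f])|_x^y\bigr)(x,y)$; since $([e_{xy},f])|_x^y$ lies in $\tilde{I}(X,\cR)$, it is a finite $\cR$-combination of the basis elements $e_{uv}$ with $x\le u\le v\le y$, and linearity of $L$ converts the left-hand side into a finite sum of entries $L(e_{uv})(x,y)$, each read off from \cref{L(e_ii)-final,L(e_ij)-final}.

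It then remains to match the two finite expressions. Writing $[e_{xy},f](u,v)$ out explicitly, the surviving contributions are controlled by $f(x,x)$, $f(y,y)$ and the coefficient $C_{xy}^{xy}$, and I expect both sides to collapse to $C_{xy}^{xy}\bigl(f(y,y)-f(x,x)\bigr)$, so that the difference is $0$. The main obstacle is the bookkeeping when $X$ is a genuine preorder rather than a poset: if $x$ or $y$ is equivalent to some other element (i.e.\ there exist $u\ne v$ with $u\le v\le u$), then both $L(e_{uv})$ and $[e_{xy},f]$ pick up extra diagonal-type terms supported on these equivalence classes, and to see that they cancel one must feed in the constancy relation \cref{C_ii^ii=C_xx^ii} together with \cref{C_ij^ij+C_ji^ji=0} from \cref{coefficients-C^ij_xy}. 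In the antisymmetric case all such terms vanish outright and the computation is immediate, so handling the cyclic contributions uniformly is the only delicate part.
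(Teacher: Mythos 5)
Your reduction to a comparable pair $x<y$, the identity $[e_{xy},L(f)](x,y)=L(f)(y,y)-L(f)(x,x)$, and the appeal to \cref{Lf(xy)=Lf-restricted} to pass to the finitely supported $f|_x^y$ are all correct, and up to that point you are following the same route as the paper. The genuine gap is the step you postpone as ``bookkeeping when $X$ is a genuine preorder'': the cyclic contributions do not cancel, and no use of \cref{C_ij^ij+C_ji^ji=0,C_ii^ii=C_xx^ii} can make them cancel, because the statement you are trying to prove is false as soon as $X$ contains a cycle. Indeed, take $X=\{x,y\}$ with $x\le y\le x$, so that $I(X,\cR)\cong{\rm M}_2(\cR)$ (say over $\cR=\mathbb{Z}$), and let $L(g)=[e_{xy},g]$, an inner derivation and hence a Lie derivation. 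For $f=e_{yx}$ one has $L(f)=e_{xx}-e_{yy}$, so $L(f)(x,x)=1\neq -1=L(f)(y,y)$. Running your scheme on this example: $L([e_{xy},f])(x,y)=L(e_{xx}-e_{yy})(x,y)=-2$, whereas $[L(e_{xy}),f](x,y)=0$ because $L(e_{xy})=[e_{xy},e_{xy}]=0$; since also $C_{xy}^{xy}=0$, the left-hand side does not collapse to $C_{xy}^{xy}\bigl(f(y,y)-f(x,x)\bigr)=0$ as you expected. The obstruction is the coefficient $C_{xy}^{xx}=-C_{xy}^{yy}$ (equal to $-1$ here), which the relations of \cref{coefficients-C^ij_xy} allow to be nonzero and which, through \cref{L(e_ij)-final}, puts unequal diagonal entries into $L(e_{yx})$ exactly when $x$ and $y$ form a cycle.

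For comparison, the paper's endgame is different from yours — instead of expanding both sides it writes the Lie identity for $f$ and for $f|_x^y$, subtracts, and reduces to the claim that $L(f|_x^y)(y,y)-L(f|_x^y)(x,x)=0$ ``by \cref{coefficients-C^ij_xy}'' — but that final claim is contradicted by the same example, so the paper's proof of \cref{L(f)_xx=L(f)_yy} breaks at precisely the point where yours does. Your instinct about where the delicacy sits was therefore accurate; it is just not a delicacy that can be handled, it is a counterexample. Both arguments are valid when $X$ is a poset: then no cycles exist, $[e_{xy},f]|_x^y=\bigl(f(y,y)-f(x,x)\bigr)e_{xy}$, and both sides equal $C_{xy}^{xy}\bigl(f(y,y)-f(x,x)\bigr)$, exactly as you predicted. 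For preorders with cycles, \cref{L(f)_xx=L(f)_yy} must be weakened, and the decomposition $R(f)=L(f)_d$ in the proof of \cref{L=D+R} modified accordingly; note that this does not refute the properness theorem itself (in the example above $L$ is already a derivation), only this particular route to it.
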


\begin{proof}
It is enough to consider the case $x<y$. We have
\begin{align}\label{L(e_xyf-fe_xy)}
L\left([e_{xy},f]\right)=L(e_{xy}f-fe_{xy})=L(e_{xy})f-fL(e_{xy})+e_{xy}L(f)-L(f)e_{xy},
\end{align}
calculating both sides at $(x,y)$, we get
\begin{align}\label{L(e_xyf-fe_xy)(xy)}
 L(e_{xy}f-fe_{xy})(x,y)=\left(L(e_{xy})f\right)(x,y)-\left(fL(e_{xy})\right)(x,y)+L(f)(y,y)-L(f)(x,x).
\end{align}
By \cref{f-mapsto-f_[xy]-homo,Lf(xy)=Lf-restricted} one has
$$
L(e_{xy}f-fe_{xy})(x,y)=L\left((e_{xy}f-fe_{xy})|_{x}^{y}\right)(x,y)=L(e_{xy}f|_x^y-f|_x^ye_{xy})(x,y).
$$
In view of \cref{(fg)(xy)=(f_[xy]g)(xy)} of \cref{prop-f_[xy]} one can replace $f$ by $f|_x^y$ in the first two summands of the right-hand side of \cref{L(e_xyf-fe_xy)(xy)}. Thus,
\begin{align*}
L(e_{xy}f|_x^y-f|_x^ye_{xy})(x,y)&=\left(L(e_{xy})f|_x^y)(x,y)-(f|_x^yL(e_{xy})\right)(x,y)\\
&\quad +L(f)(y,y)-L(f)(x,x).
\end{align*}
On the other hand, writing \cref{L(e_xyf-fe_xy)} for $f|_x^y$ and evaluating at $(x,y)$, we obtain
\begin{align*}
L(e_{xy}f|_x^y-f|_x^ye_{xy})(x,y)&=\left(L(e_{xy})f|_x^y)(x,y)-(f|_x^yL(e_{xy})\right)(x,y)\\
&\quad +L(f|_x^y)(y,y)-L(f|_x^y)(x,x).
\end{align*}
It follows that
$$
L(f)(y,y)-L(f)(x,x)=L(f|_x^y)(y,y)-L(f|_x^y)(x,x),
$$
the latter being zero by \cref{coefficients-C^ij_xy}.
\end{proof}

\begin{defn}\label{diag-defn}
Given $f\in I(X,\cR)$, we define the {\it diagonal} of $f$ by
$$
f_d(x,y)=
\begin{cases}
f(x,y), & x=y,\\
0, & x\neq y.
\end{cases}
$$
\end{defn}

\begin{thrm}\label{L=D+R}
Let $X$ be connected and $\cR$ be $2$-torsion free. Then any Lie derivation of $I(X,\cR)$ is the sum of a derivation of $I(X,\cR)$ and a linear map from $I(X,\cR)$ to its center.
\end{thrm}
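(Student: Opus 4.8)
The plan is to leverage the restriction and extension machinery of \cref{gen-case} in order to transport the finite-case decomposition of \cref{L-proper-fin-case} from the finitely supported subalgebra to all of $I(X,\cR)$. Throughout I use that, since $X$ is connected, the center of $I(X,\cR)$ equals $\cR\delta$ by \cite[Corollary 1.3.15]{SpDo}, so ``central-valued'' means ``valued in $\cR\delta$''. First I would restrict the given Lie derivation $L$ to $\tilde I(X,\cR)$, obtaining a Lie derivation $L_0:=L|_{\tilde I(X,\cR)}$ in the sense introduced in \cref{gen-case}. As observed there, \cref{L(e_ij)-gen-form,coefficients-C^ij_xy} remain valid for $\tilde I(X,\cR)$, so the coordinates $C^{ij}_{xy}$ of $L_0$ on the standard basis obey the relations \cref{C_ij^ii+C_ij^jj=0,C_ij^ij+C_jk^jk=C_ik^ik,C_ij^ij+C_ji^ji=0,C_ii^ii=C_xx^ii}.

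Imitating the proof of \cref{L-proper-fin-case}, I would then define $D_0\colon\tilde I(X,\cR)\to I(X,\cR)$ on the basis by
\begin{align*}
 D_0(e_{ii})&=\sum_{x<i}C_{xi}^{ii}e_{xi}+\sum_{y>i}C_{iy}^{ii}e_{iy},\\
 D_0(e_{ij})&=\sum_{x<i}C_{xi}^{ii}e_{xj}+C_{ij}^{ij}e_{ij}+\sum_{y>j}C_{jy}^{jj}e_{iy}\quad(i<j),
\end{align*}
extended $\cR$-linearly. The truncated coordinates of $D_0$ satisfy the same two relations as in the finite case (immediate consequences of \cref{coefficients-C^ij_xy}), so \cite[Theorem 2.2]{Xiao1}, now applied to $\tilde I(X,\cR)$, shows that $D_0$ is a derivation of $\tilde I(X,\cR)$. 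Setting $F_0:=L_0-D_0$, one computes $F_0(e_{ij})=0$ for $i<j$ and $F_0(e_{ii})=C^{ii}_{ii}\delta$ (using $C^{ii}_{xx}=C^{ii}_{ii}$ from \cref{C_ii^ii=C_xx^ii}); hence $F_0$ is valued in $\cR\delta$.

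Next I would extend. By \cref{ext-D-to-I(XR)} the map $D:=\widehat{D_0}$ is a derivation of $I(X,\cR)$ that restricts to $D_0$ on $\tilde I(X,\cR)$, and by the defining formula of \cref{ext-L-to-I(XR)} one has $D(f)(x,y)=D_0(f|_x^y)(x,y)$ for all $f$ and $x\le y$. Put $F:=L-D$; it remains to show $F$ is valued in $\cR\delta$. Fix $f\in I(X,\cR)$ and $x\le y$. For $x<y$, \cref{Lf(xy)=Lf-restricted} gives $L(f)(x,y)=L_0(f|_x^y)(x,y)$, so $F(f)(x,y)=(L_0-D_0)(f|_x^y)(x,y)=F_0(f|_x^y)(x,y)=0$, the last equality because $F_0(f|_x^y)\in\cR\delta$ and $x\neq y$. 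For $x=y$, since $D$ is a derivation the diagonal case of \cref{Lf(xy)=Lf-restricted} applies and yields $D(f)(x,x)=D_0(f|_x^x)(x,x)=0$, as $D_0$ takes values with zero diagonal; therefore $F(f)(x,x)=L(f)(x,x)$, which is independent of $x$ by \cref{L(f)_xx=L(f)_yy}. Thus $F(f)=c(f)\,\delta$ for a scalar $c(f)\in\cR$, so $F$ is a linear map into the center and $L=D+F$ is the required decomposition.

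The step I expect to be most delicate is the final one, showing that the remainder $F=L-\widehat{D_0}$ lands in the center at \emph{every} pair $(x,y)$ rather than merely on $\tilde I(X,\cR)$. This is exactly where both halves of \cref{Lf(xy)=Lf-restricted} are indispensable: its off-diagonal identity, valid for any Lie derivation, collapses each entry $L(f)(x,y)$ with $x<y$ into a computation inside $\tilde I(X,\cR)$, where it cancels $D$ up to the central remainder $F_0$, while the diagonal entries can only be controlled after $D$ is \emph{known} to be an honest derivation, at which point \cref{L(f)_xx=L(f)_yy} forces all diagonal values of $F(f)$ to agree. A secondary point worth checking is that \cite[Theorem 2.2]{Xiao1} is genuinely available for the (possibly non-unital) algebra $\tilde I(X,\cR)$ with values in $I(X,\cR)$, so that $D_0$ is legitimately a derivation before it is extended.
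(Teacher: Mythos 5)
Your argument is correct and is essentially the paper's own proof: both reduce to $\tilde I(X,\cR)$, build the derivation part from the finite-case construction of \cref{L-proper-fin-case} on the standard basis, extend it via \cref{ext-L-to-I(XR),ext-D-to-I(XR)}, and control the remainder entrywise using \cref{Lf(xy)=Lf-restricted} off the diagonal and \cref{L(f)_xx=L(f)_yy} on the diagonal. The only difference is organizational: the paper defines the central part first, as $R(f):=L(f)_d$, and then verifies that $D:=L-R$ coincides with the extension $\hat{D}$ of its restriction to $\tilde I(X,\cR)$, whereas you extend first and then check that $L-\hat{D}$ is diagonal with constant entries, hence central.
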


\begin{proof}
Let $L$ be a Lie derivation of $I(X,\cR)$. Define $R(f)=L(f)_d$ and $D(f)=L(f)-R(f)$. Then $R$ is a linear map from $I(X,\cR)$ to the center of $I(X,\cR)$ by \cref{L(f)_xx=L(f)_yy}. It remains to prove that $D$ is a derivation of $I(X,\cR)$.
For any $f\in\tilde{I}(X,\cR)$ one has $D(f)=L(f)-L(f)_d$, which is a derivation of $\tilde{I}(X,R)$ by the proof of \cref{L-proper-fin-case}. Therefore, $D$ extends to a derivation $\hat{D}$ of $I(X,\cR)$ by \cref{ext-D-to-I(XR)}. Observe that
\begin{align}\label{hat D(f)(xy)}
 \hat{D}(f)(x,y)=D(f|_x^y)(x,y)=L(f|_x^y)(x,y)-L(f|_x^y)_d(x,y).
\end{align}
If $x<y$, then the latter coincides with $L(f|_x^y)(x,y)$, which is $L(f)(x,y)$ by \cref{Lf(xy)=Lf-restricted}. But $L(f)_d(x,y)=0$ in this case, so $L(f)(x,y)=D(f)(x,y)$. And if $x=y$, then the right-hand side of \cref{hat D(f)(xy)} is zero, which coincides with $L(f)(x,x)-L(f)_d(x,x)=D(f)(x,x)$. Thus, $\hat{D}=D$, so $D$ is a derivation of $I(X,\cR)$.
 \end{proof}

\section*{Acknowledgements}
The authors thank professor Zhankui Xiao who prompted them to focus their studies on Lie derivations.

\bibliography{bibl}{}
\bibliographystyle{acm}

\end{document}